\numberwithin{equation}{section}
    \renewcommand{\epsilon}{\varepsilon}
    \renewcommand{\phi}{\varphi}
    \renewcommand{\tilde}{\widetilde}
    \renewcommand{\hat}{\widehat}
    \newcommand{\defeq}{\vcentcolon=}
    \newcommand{\eqdef}{=\vcentcolon}
    \newcommand{\set}[1]{\{ #1 \}}
    \newcommand{\R}{\mathbb{R}}
    \newcommand{\abs}[1]{\lvert #1 \rvert}
    \newcommand{\inner}[2]{\langle #1, #2 \rangle}
    \newcommand{\norm}[1]{\lVert #1 \rVert}
    \newcommand{\bigO}[2][]{\mathcal{O}_{#1}(#2)}
    \let\vec\undefined
    \NewDocumentCommand{\vec}{ o O{} m }
    { \IfValueTF {#1}
        { \bm{\mathrm{#3}}^{(#1)}_{#2} }
        { \bm{\mathrm{#3}} }
    }
    \NewDocumentCommand{\mat}{ o O{} m }
    { \IfValueTF {#1}
        { \bm{\mathrm{#3}}^{(#1)}_{#2} }
        { \bm{\mathrm{#3}} }
    }
    \NewDocumentCommand{\ten}{ o O{} m }
    { \IfValueTF {#1}
        { \bm{\mathscr{#3}}^{(#1)}_{#2} }
        { \bm{\mathscr{#3}} }
    }
    \newcommand{\otr}{\circ}
    \newcommand{\kron}{\otimes}
    \newcommand{\hd}{\ast}
    \DeclareMathOperator*{\bighd}{\scalerel*{\ast}{\bigodot}}
    \newcommand{\kr}{\odot}
    \newcommand{\bigkr}{\bigodot}
    \newcommand{\tp}{\top}
    \newcommand{\pinv}{+}
    \NewDocumentCommand{\tuck}{ o m }
    { \IfValueTF {#1}
        { \llbracket #1\,;\,#2 \rrbracket }
        { \llbracket #2 \rrbracket }
    }
    \newcommand{\D}{\ten{D}}
    \newcommand{\argmin}{\operatorname{argmin}}
    \newcommand{\diag}{\operatorname{diag}}
    \newcommand{\rank}{\operatorname{rank}}
\newtheorem{theorem}{Theorem}[section]
\newtheorem{proposition}[theorem]{Proposition}
\newtheorem{lemma}[theorem]{Lemma}
\newtheorem{corollary}[theorem]{Corollary}
\theoremstyle{definition}
\newtheorem{definition}[theorem]{Definition}
\theoremstyle{remark}
\newtheorem{remark}[theorem]{Remark}
\newtheorem*{note}{Note}
\newtheorem*{notation}{Notation}
\begin{document}

\title[Convergence of CP-AltLS]{Convergence of the alternating least squares algorithm for CP tensor decompositions}

\author[N.~Hu]{Nicholas Hu}
\address{Department of Mathematics, UCLA}
\email{njhu@math.ucla.edu}

\author[M.~A.~Iwen]{Mark A. Iwen}
\address{Department of Mathematics, and Department of Computational Mathematics, Science, and Engineering, MSU}
\email{iwenmark@math.msu.edu}

\author[D.~Needell]{Deanna Needell}
\address{Department of Mathematics, UCLA}
\email{deanna@math.ucla.edu}

\author[R.~Wang]{Rongrong Wang}
\address{Department of Computational Mathematics, Science, and Engineering, MSU}
\email{wangron6@math.msu.edu}

\begin{abstract}
    The alternating least squares (ALS/AltLS) method is a widely used algorithm for computing the CP decomposition of a tensor. However, its convergence theory is still incompletely understood. 
    
    In this paper, we prove explicit quantitative local convergence theorems for CP-AltLS applied to orthogonally decomposable and incoherently decomposable tensors. Specifically, we show that CP-AltLS converges polynomially with order $N-1$ for $N$\textsuperscript{th}-order orthogonally decomposable tensors and linearly for incoherently decomposable tensors, with convergence being measured in terms of the angles between the factors of the exact tensor and those of the approximate tensor. Unlike existing results, our analysis is both quantitative and constructive, applying to standard CP-AltLS and accommodating factor matrices with small but nonzero mutual coherence, while remaining applicable to tensors of arbitrary rank.

    We also confirm these rates of convergence numerically and investigate accelerating the convergence of CP-AltLS using an SVD-based coherence reduction scheme.
\end{abstract}

\maketitle

\section{Introduction}

\emph{Tensors} are multidimensional arrays that are widely used in the sciences to represent and analyze data. The multiple dimensions (or ``modes'') of a tensor can correspond to different components of the data, such as position, time, frequency, intensity, object, or type.

\emph{Tensor decompositions} are structured representations of tensors -- usually approximate in practice -- that render them more amenable to storage, manipulation, and/or analysis. 
One of the most fundamental decompositions, the \emph{CP decomposition}, expresses a tensor as a sum of simpler \emph{rank-one component} tensors which can reveal patterns or features of the underlying data. Each rank-one component is in turn the outer product of \emph{factor} vectors, and for each mode of the tensor, the factors of each component together constitute a \emph{factor matrix}. 
For instance, one of the tensors that we experiment with in \Cref{SS:exp-orth} consists of fluorescence measurements from chemical samples; each component tensor corresponds to an amino acid present in the samples \cite{Bro}. Other applications include psychometrics \cite{CarollChang}, phonetics \cite{Harshman}, and neuroscience \cite{Mocks}.

The traditional ``workhorse'' algorithm \cite{KoldaBader} for computing a CP decomposition of a tensor is the \emph{alternating least squares \textup(AltLS\textup)}\footnotemark{} method proposed by Carroll and Chang \cite{CarollChang} and Harshman \cite{Harshman} in 1970. It cycles (or ``alternates'') through each mode of the tensor and finds the optimal factor matrix for that mode, in a least squares sense, while fixing the factor matrices for all other modes. This cycle repeats until a given convergence criterion is satisfied.

\footnotetext{We choose to use the abbreviation ``AltLS'' instead of the standard ``ALS'' because of the latter's association with a neurodegenerative disorder.}

Despite the popularity of this method, its convergence theory is still incompletely understood.  Most existing results are only applicable in limited cases (discussed below), such as when the tensor consists of a single rank-one component (a ``rank-one'' tensor), and are often non-explicit, qualitative, or otherwise imprecise about the rate of convergence. In contrast, we prove explicit and quantitative convergence theorems for broad classes of tensors herein.

\subsection{Prior work}

The CP-AltLS method can be viewed as a nonlinear block Gauss--Seidel iteration for minimizing the (squared) error of the ``reconstructed'' tensor, that is, the tensor formed by recombining the factors produced by the algorithm. Using this framework, Uschmajew \cite{Uschmajew} proved that it converges locally linearly to local minima of the error satisfying a nondegeneracy condition, and also established a quantitative sufficient condition for this nondegeneracy in the case of rank-one tensors. Although applicable to general tensors, this significant result still left some important questions unaddressed. For example, convergence was proved for factor matrices initialized sufficiently close to those at a local minimum, but this closeness was unquantified.

Another line of analysis formulates CP-AltLS as a high-order power method (HOPM) \cite{HuLi,HuYe,WangChu,WangChuYu}. For instance, Wang and Chu \cite{WangChu} employed this strategy to prove almost everywhere convergence to local minima. This improved upon Uschmajew's result in that their result was global and unconditional; however, it was also qualitative and limited to the rank-one case. Subsequently, they along with Yu \cite{WangChuYu} found that ``semiorthogonality'' (a condition similar to our orthogonal decomposability condition in \Cref{D:odeco}) guaranteed the existence of a best low-rank approximation to a given tensor. They further devised a modified HOPM using the polar decomposition to maintain this orthogonality, for which they proved almost everywhere convergence to local minima of the error as in their previous paper. While semiorthogonality allowed them to consider the general-rank case, this result was also qualitative, and their analysis of this modified HOPM ultimately did not apply to the standard CP-AltLS method.

\subsection{Contributions}

In this paper, we prove explicit quantitative local convergence theorems (that is, using constructive proofs with numerical constants) for CP-AltLS applied to both \emph{orthogonally decomposable} and \emph{incoherently decomposable} tensors (see \Cref{D:odeco,D:ideco}).  
Specifically, we show that CP-AltLS converges polynomially with order $N-1$ for $N$\textsuperscript{th}-order orthogonally decomposable tensors and linearly for incoherently decomposable tensors, with convergence being measured in terms of the angles between the factors of the exact tensor and those of the approximate tensor. Our approach is direct and less technical than the aforementioned approaches and allows for factor matrices with small but nonzero mutual coherence while remaining applicable to tensors of arbitrary rank.

\subsection{Organization}

We begin in \Cref{S:notation} by defining the notation we use throughout the paper, most of which is standard in the literature. In \Cref{SS:cp}, we give a precise definition of the CP decomposition and the types of tensors that we consider: orthogonally and incoherently decomposable tensors.

This is followed by the local convergence analysis in \Cref{S:convergence}. We first analyze a variant (\Cref{CP-AltLS}) of the standard CP-AltLS algorithm (\Cref{CP-SAltLS}). The main results are \Cref{T:odeco} in \Cref{SS:odeco}, which establishes local convergence for orthogonally decomposable tensors, and \Cref{T:ideco} in \Cref{SS:ideco}, which does the same for incoherently decomposable tensors. We then explain how to extend these results to the standard algorithm in \Cref{SS:convergence-saltls} and illustrate how randomization of initial guesses yields global convergence with high probability in \Cref{SS:convergence-global}.

In \Cref{SS:exp-odeco,SS:exp-ideco}, we perform numerical experiments that validate \Cref{T:odeco,T:ideco}. We also observe convergence of the ``weights'' in the CP decomposition (see \Cref{SS:cp} and \Cref{C:weights}) in \Cref{SS:exp-weights}. Finally, in \Cref{SS:exp-orth} we investigate accelerating the convergence of CP-AltLS using an SVD-based coherence reduction scheme inspired by Sharan and Valiant's Orth-ALS and Hybrid-ALS algorithms \cite{SharanValiant}.

Technical lemmas employed in the proofs of the convergence theorems are collected in \Cref{appendix}.

\section{Notation and preliminaries} \label{S:notation}

Vectors (first-order tensors) will be represented by bold lowercase letters (e.g., $\vec{a}$);
matrices (second-order tensors) will be represented by bold uppercase letters (e.g., $\mat{A}$);
higher-order tensors will be represented by bold uppercase script letters (e.g., $\ten{X}$).

The $(i_1, \dots, i_N)$-element of a tensor $\ten{X}$ will be denoted $x_{i_1, \dots, i_N}$
or simply $x_{i_1 \cdots i_N}$ when the indices are clearly distinguishable. The
$j$\textsuperscript{th} column of a matrix $\mat{A}$ will be denoted $\vec{a}_j$.

\subsection{Tensor operations}

\begin{definition}[Inner product and norm of tensors]
    The \emph{\textup(Frobenius\textup) inner product} of tensors $\ten{X}, \ten{Y} \in
    \R^{I_1 \times \cdots \times I_N}$ is
    $\inner{\ten{X}}{\ten{Y}} \defeq \textstyle \sum_{i_1 = 1}^{I_1} \cdots \sum_{i_N = 1}^{I_N} x_{i_1 \cdots i_N} y_{i_1 \cdots i_N}$
    and the \emph{\textup(Frobenius\textup) norm} of $\ten{X}$ is $\norm{\ten{X}} \defeq
    \inner{\ten{X}}{\ten{X}}^\frac{1}{2}$.
\end{definition}

\begin{definition}[$(p, q)$-norm of a matrix]
    Let $p, q \in [1, \infty]$. The \emph{$(p, q)$-norm} of a matrix $\mat{A}$ is
    $\norm{\mat{A}}_{p, q} \defeq \sup_{\norm{\vec{x}}_p = 1} \norm{\mat{A}
    \vec{x}}_q$.
    The $(p, p)$-norm of $\mat{A}$ will simply be called the \emph{$p$-norm} of $\mat{A}$ and will be
    denoted $\norm{\mat{A}}_p$.
    The $(1, \infty)$-norm will simply be called the \emph{maximum-norm} of $\mat{A}$ and will be denoted
    $\norm{\mat{A}}_\mathrm{max}$ (because it is equal to the maximum absolute value of
    the elements of $\mat{A}$).
\end{definition}

We will also employ several different tensor/matrix products, whose definitions are
summarized in the following table.

\begin{table}[H]
\begin{tabularx}{\textwidth}{lX}
    \toprule
    \textbf{Product} & \textbf{Definition} \\
    \midrule
    \makecell[tl]{{Outer} \\ $\otr : \R^{I_1 \times \cdots \times I_N} \times \R^{J_1 \times \cdots \times J_M} 
    \to \R^{I_1 \times \cdots \times I_N \times J_1 \times \cdots \times J_M}$} 
    & \makecell[tl]{$(\ten{X} \otr \ten{Y})_{i_1 \cdots i_N j_1 \cdots j_M}$ \\ $= x_{i_1 \cdots i_N} y_{j_1 \cdots j_M}$} \\
    \midrule
    \makecell[tl]{{Kronecker} \\ $\kron : \R^{I \times J} \times \R^{K \times L} \to \R^{IK \times JL}$} 
    & \makecell[tl]{$(\mat{A} \kron \mat{B})_{(i-1)K + k, (j-1)L + \ell}$ \\ $= a_{ij} b_{k\ell}$} \\
    \midrule
    \makecell[tl]{{Hadamard} \\ $\hd : \R^{I \times J} \times \R^{I \times J} \to \R^{I \times J}$} 
    & \makecell[tl]{$(\mat{A} \hd \mat{B})_{ij}$ \\ $= a_{ij} b_{ij}$} \\
    \midrule
    \makecell[tl]{{Khatri--Rao} \\ $\kr : \R^{I \times K} \times \R^{J \times K} \to \R^{IJ \times K}$} 
    & \makecell[tl]{$(\mat{A} \kr \mat{B})_{(i-1)J + j, k}$ \\ $= a_{ik} b_{jk}$} \\
    \bottomrule
\end{tabularx}
\end{table}

Finally, we introduce notation for ``reshaping'' a tensor into a matrix and for considering
the diagonal and off-diagonal entries of a matrix.

\begin{definition}[Fibres and matricization]
    Let $\ten{X} \in \R^{I_1 \times \cdots \times I_N}$. The \emph{mode-$n$ fibres} of
    $\ten{X}$ are the vectors
    \[
        \vec{x}_{i_1, \dots, i_{n-1}, :, i_{n+1}, \dots, i_N}
        \defeq
        \begin{bmatrix}
            x_{i_1, \dots, i_{n-1}, 1, i_{n+1}, \dots, i_N} \\
            x_{i_1, \dots, i_{n-1}, 2, i_{n+1}, \dots, i_N} \\
            \vdots \\
            x_{i_1, \dots, i_{n-1}, I_n, i_{n+1}, \dots, i_N}
        \end{bmatrix}
        \in \R^{I_n}
    \]
    for $i_m \in [I_m]$ and $m \neq n$. 
    The \emph{mode-$n$ matricization} of $\ten{X}$ is the matrix $\mat{X}_{(n)} \in
    \R^{I_n \times \prod_{m \neq n} I_m}$ given by
    \[
        \mat{X}_{(n)}
        =
        \begin{bmatrix}
            \vec{x}_{1, \dots, 1, :, 1, \dots, 1}
            &
            \vec{x}_{2, \dots, 1, :, 1, \dots, 1}
            & \cdots &
            \vec{x}_{I_1, \dots, I_{n-1}, :, I_{n+1}, \dots, I_N}
        \end{bmatrix},
    \]
    where the indices of the fibres are colexicographically ordered.
\end{definition}

\begin{definition}[Diagonal and off-diagonal parts of a matrix]
    The \emph{diagonal part} of a matrix $\mat{A} \in \R^{n \times n}$ is $\D(\mat{A})
    \defeq \mat{A} \hd \mat{I}$ and its \emph{off-diagonal part} is $\D'(\mat{A}) \defeq
    \mat{A} - \D(\mat{A})$.
\end{definition}

\subsection{Tensor CP decompositions} \label{SS:cp}

In this paper, we consider algorithms for computing CP decompositions, which express a
tensor as a weighted sum of simpler (``rank-one'') tensors.

\begin{definition}[CP decomposition and rank of a tensor]
    A \emph{CP decomposition} of a tensor $\ten{X}
    \in \R^{I_1 \times \cdots \times I_N}$ is a decomposition of the form
    \[
        \ten{X}
        =
        \tuck[\vec{\lambda}]{\mat[1]{A}, \dots, \mat[N]{A}}
        \defeq
        \sum_{r=1}^R \lambda_r \vec[1][r]{a} \otr \cdots \otr \vec[N][r]{a},
    \]
    where $\vec{\lambda} \in \R^R$ and $\mat[n]{A} \in \R^{I_n \times R}$; the $\lambda_r$
    are called the \emph{weights} and the $\mat[n]{A}$ are called the \emph{factor matrices}. If $R$
    is minimal, the decomposition is called a \emph{rank decomposition} and we say that
    $\ten{X}$ has \emph{rank} $R$.

    (The double bracket operator in this definition is known as the \emph{Kruskal operator}
    and is a special case of the \emph{Tucker operator}.)
\end{definition}

For certain tensors -- namely, orthogonally decomposable tensors -- we can prove that these
algorithms converge rapidly.

\begin{definition}[Orthogonally decomposable tensor] \label{D:odeco}
    A tensor is said to be \emph{orthogonally decomposable} if it admits a CP decomposition
    whose factor matrices have orthonormal columns.
\end{definition}

\begin{remark}
    If $\ten{X} \in \R^{I_1 \times \cdots \times I_N}$ is orthogonally decomposable, then
    $\rank(\ten{X}) \leq \min_{n \in [N]} I_n$.
\end{remark}

Even if a tensor is not orthogonally decomposable, if it admits a CP decomposition whose
factor matrices have ``almost orthogonal'' columns, we will be able to establish local convergence
(albeit at a slower rate).

\begin{definition}[Coherence]
    The \emph{coherence} of a matrix $\mat{A} \in \R^{n \times n}$ with columns of unit
    2-norm is $\mu(\mat{A}) \defeq \max_{i \neq j} \abs{\inner{\vec{a}_i}{\vec{a}_j}}$.
    (This normalization ensures that $\mu(\mat{A}) \in [0, 1]$.)
\end{definition}

\begin{definition}[Incoherently decomposable tensor] \label{D:ideco}
    A tensor is said to be \emph{incoherently decomposable} if it admits a CP decomposition
    whose factor matrices have sufficiently low coherence (the columns of the factor matrices are assumed to
    be normalized).  More precisely, it is said to be \emph{$\mu$-coherently decomposable}
    if the coherence of each factor matrix is at most $\mu$.
\end{definition}

\section{Local convergence of CP-AltLS} \label{S:convergence}

We begin by reviewing the alternating least squares algorithm for CP decomposition
(CP-AltLS) \cite{KoldaBader}. 
If $\ten{X} = \tuck[\vec{\lambda}]{\mat[1]{A}, \dots, \mat[N]{A}}$, we have
\begin{align*}
    \mat{X}_{(1)} &= \mat[1]{A} \mat{\Lambda} (\mat[N]{A} \kr \mat[N-1]{A} \kr \cdots \kr
    \mat[2]{A})^\tp, \\
               &\shortvdotswithin{=}
    \mat{X}_{(n)} &= \mat[n]{A} \mat{\Lambda} (\mat[N]{A} \kr \cdots \kr \mat[n+1]{A} \kr
    \mat[n-1]{A} \kr \cdots \kr \mat[1]{A})^\tp, \\
               &\shortvdotswithin{=}
    \mat{X}_{(N)} &= \mat[N]{A} \mat{\Lambda} (\mat[N-1]{A} \kr \cdots \kr
    \mat[2]{A} \kr \mat[1]{A})^\tp,
\end{align*}
where $\mat{\Lambda} = \diag(\vec{\lambda})$. Given $\ten{X}$ and initial guesses for the
factor matrices $\mat[n]{A}$, the CP-AltLS algorithm approximates each factor matrix
by regarding the equation for each mode as a separate least squares problem for the
corresponding factor matrix, in which all other factor matrices are fixed. That is,
\begin{gather*}
    \mat[n]{A} 
    =
    \underset{\mat{A}}{\argmin{}}
    \norm{\mat{X}_{(n)} - \mat{A} \mat{\Lambda} (\mat[n]{K})^\tp}, \\
    \text{where} \quad
    \mat[n]{K}
    \defeq
    \mat[N]{A} \kr \cdots \kr \mat[n+1]{A} \kr \mat[n-1]{A} \kr \cdots \kr \mat[1]{A}.
\end{gather*}
Typically, we solve for $\mat{\hat{A}} \defeq \mat{A} \mat{\Lambda}$ and then normalize its
columns to obtain $\mat[n]{A}$, updating $\vec{\lambda}$ accordingly. The solution is then
\begin{gather*}
    \mat{\hat{A}}
    = \mat{X}_{(n)} \left((\mat[n]{K})^\tp\right)^\pinv
    = \mat{X}_{(n)} \mat[n]{K} (\mat[n]{H})^\pinv, \\
    \text{where} \quad
    \mat[n]{H}
    \defeq
    \mat[N]{G} \hd \cdots \hd \mat[n+1]{G} \hd 
    \mat[n-1]{G} \hd \cdots \hd \mat[1]{G} \\
    \text{and} \quad
    \mat[n]{G}
    \defeq
    (\mat[n]{A})^\tp \mat[n]{A}.
\end{gather*}
In each iteration, the algorithm cycles (or ``alternates'') through modes $n = 1, 2, \dots,
N$, resulting in \Cref{CP-SAltLS}.

\begin{algorithm}[htbp]
    \caption{CP-AltLS} \label{CP-SAltLS}

    \DontPrintSemicolon
    \SetAlgoVlined
    \SetKwFor{For}{for}{}{}
    \SetKwFor{While}{while}{}{}

    \KwIn{tensor $\ten{X} \in \R^{I_1 \times \cdots \times I_N}$, initial approximate factor matrices $\mat[n]{A} \in
    \R^{I_n \times R}$ with normalized columns for $n = 1, \dots, N$}
    \KwOut{approximate weights $\vec{\lambda} \in \R^R$, approximate factor matrices $\mat[n]{A} \in
    \R^{I_n \times R}$ with normalized columns for $n = 1, \dots, N$}

    \For{$n = 1$ \KwTo $N$}{
        $\mat[n]{G} \leftarrow (\mat[n]{A})^\tp \mat[n]{A}$\;
    }

    \While{stopping condition has not been satisfied}%
    {
        \For{$n = 1$ \KwTo $N$}{
            $\mat[n]{K} \leftarrow \mat[N]{A} \kr \cdots \kr \mat[n+1]{A} \kr \mat[n-1]{A} \kr \cdots \kr \mat[1]{A}$\;
            $\mat[n]{H} \leftarrow \mat[N]{G} \hd \cdots \hd \mat[n+1]{G} \hd \mat[n-1]{G} \hd \cdots \hd \mat[1]{G}$\;
            $\mat[n]{M} \leftarrow \mat{X}_{(n)} \mat[n]{K}$\;
            $\mat[n]{A} \leftarrow \mat[n]{M} (\mat[n]{H})^\pinv$\;
            normalize columns of $\mat[n]{A}$, updating $\vec{\lambda}$ accordingly\;
            $\mat[n]{G} \leftarrow (\mat[n]{A})^\tp \mat[n]{A}$\;
        }
    }
\end{algorithm}

To simplify our arguments, we will consider a variant (\Cref{CP-AltLS}) of this algorithm in which the least squares problems for a given iteration depend only on the factor matrices computed in the previous iteration (as opposed to factor matrices computed in the current iteration as well). In addition to being more amenable to analysis, this formulation has the potential computational advantage that the loops over the modes are parallelizable. For this reason, when it is necessary to distinguish between the standard algorithm and the variant, we will refer to the former as \emph{serial} CP-AltLS (CP-SAltLS) and the latter as (\emph{parallel}) CP-AltLS (CP-(P)AltLS).

\begin{algorithm}[htbp]
    \caption{CP-AltLS} \label{CP-AltLS}

    \DontPrintSemicolon
    \SetAlgoVlined
    \SetKwFor{For}{for}{}{}
    \SetKwFor{While}{while}{}{}

    \KwIn{tensor $\ten{X} \in \R^{I_1 \times \cdots \times I_N}$, initial approximate factor matrices $\mat[n]{A} \in
    \R^{I_n \times R}$ with normalized columns for $n = 1, \dots, N$}
    \KwOut{approximate weights $\vec{\lambda} \in \R^R$, approximate factor matrices $\mat[n]{A} \in
    \R^{I_n \times R}$ with normalized columns for $n = 1, \dots, N$}

    \While{stopping condition has not been satisfied}%
    {
        \For{$n = 1$ \KwTo $N$}{
            $\mat[n]{G} \leftarrow (\mat[n]{A})^\tp \mat[n]{A}$\;
            $\mat[n]{K} \leftarrow \mat[N]{A} \kr \cdots \kr \mat[n+1]{A} \kr \mat[n-1]{A} \kr \cdots \kr \mat[1]{A}$\;
        }
        \For{$n = 1$ \KwTo $N$}{
            $\mat[n]{H} \leftarrow \mat[N]{G} \hd \cdots \hd \mat[n+1]{G} \hd \mat[n-1]{G} \hd \cdots \hd \mat[1]{G}$\;
            $\mat[n]{M} \leftarrow \mat{X}_{(n)} \mat[n]{K}$\;
            $\mat[n]{A} \leftarrow \mat[n]{M} (\mat[n]{H})^\pinv$\;
            normalize columns of $\mat[n]{A}$, updating $\vec{\lambda}$ accordingly\;
        }
    }
\end{algorithm}

\begin{notation}
    \hfill
    \begin{itemize}
        \item
            The letters G, K, H, and M were chosen for the matrices above as mnemonics for
            \emph{Gram} matrix, \emph{Khatri--Rao} product, \emph{Hadamard} product, and
            \emph{matricized-tensor-times-Khatri--Rao} product (MTTKRP), respectively.

        \item
            Hadamard products of the form $\mat[N]{G} \hd \cdots \hd \mat[n+1]{G} \hd
            \mat[n-1]{G} \hd \cdots \hd \mat[1]{G}$ will be abbreviated as $\bighd_{m \neq
            n} \mat[m]{G}$.

        \item
            Khatri--Rao products of the form $\mat[N]{A} \kr \cdots \kr \mat[n+1]{A} \kr
            \mat[n-1]{A} \kr \cdots \kr \mat[1]{A}$ will be abbreviated as $\bigkr_{m \neq
            n} \mat[m]{A}$; note that the index is \emph{decreasing} in this product.

        \item
            Iteration numbers will be appended to the superscripts of vectors and matrices
            produced by the algorithm (e.g., $\vec[k]{\lambda}$ will denote $\vec{\lambda}$
            after $k$ iterations, $\mat[n, k]{A}$ will denote $\mat[n]{A}$ after $k$
            iterations).
    \end{itemize}
\end{notation}

\begin{remark} \label{R:alg}
    \hfill
    \begin{enumerate}[label=\textup(\roman*\textup)]
        \item
            One possible stopping condition is that the error $\norm{\ten{X} - \ten[k]{X}}$
            be small, where $\ten[k]{X} \defeq \tuck[\vec[k]{\lambda}]{\mat[1, k]{A}, \dots,
            \mat[N, k]{A}}$. To avoid the cost of forming $\ten[k]{X}$, one can observe that
            \[
                \norm{\ten{X} - \ten[k]{X}}^2 
                = \norm{\ten{X}}^2 - 2 \inner{\ten{X}}{ \ten[k]{X}} + \norm{\ten[k]{X}}^2
            \]
            and make use of the fact that $\mat[k][(n)]{X} = \mat[n, k]{A} \mat[k]{\Lambda}
            (\mat[n, k+1]{K})^\tp$, where $\mat[k]{\Lambda} = \diag(\vec[k]{\lambda})$
            \cite{tensor-toolbox}.
            The norm of $\ten{X}$ is constant and need only be computed once, while the
            other two terms can be computed using matrices already formed in the course of
            each iteration:
            \begin{align*}
                \inner{\ten{X}}{\ten[k]{X}}
                &= \inner{\mat{X}_{(n)}}{\mat[k][(n)]{X}} \\
                &= \inner{\mat{X}_{(n)} \mat[n, k+1]{K}}{\mat[n, k]{A} \mat[k]{\Lambda}} \\
                &= \inner{\mat[n, k+1]{M}}{\mat[n, k]{A} \mat[k]{\Lambda}}
            \end{align*}
            and
            \begin{align*}
                \norm{\ten[k]{X}}^2
                &= \inner{\mat[k][(n)]{X}}{\mat[k][(n)]{X}} \\
                &= \inner{(\mat[n, k+1]{K})^\tp \mat[n, k+1]{K}}{(\mat[n, k]{A}
                \mat[k]{\Lambda})^\tp \mat[n, k]{A} \mat[k]{\Lambda}} \\
                &= \inner{\mat[n, k+1]{H}}{(\mat[k]{\Lambda})^\tp \mat[n, k+1]{G}
                \mat[k]{\Lambda}}.
            \end{align*}
            However, it should be noted that this method of computing the error is less
            accurate than a direct computation because it computes the square of the error.

        \item
            The normalization step, while not mathematically necessary for convergence in
            the sense below, is nevertheless advisable to prevent numerical
            underflow/overflow \cite{tensor-toolbox}.

        \item
            The updating of $\vec{\lambda}$ need only be performed for one (arbitrary) $n$
            (e.g., $n \in \argmin_{n \in [N]} I_n$) and consists of setting $\lambda_r
            \leftarrow \norm{\vec[n][r]{a}}$ for $r = 1, \dots, R$.
    \end{enumerate}
\end{remark}

Our first main result is that CP-AltLS converges locally \emph{polynomially with order
$N-1$} for $N$\textsuperscript{th}-order \emph{orthogonally} decomposable tensors.

\begin{theorem}[Local convergence of CP-AltLS for orthogonally decomposable tensors] 
    \label{T:odeco}
    Suppose that $\ten{X} \in \R^{I_1 \times \cdots \times I_N}$ \textup($N \geq 3$\textup)
    is orthogonally decomposable and write $\ten{X} = \tuck[\vec{\lambda}]{\mat[1]{A},
    \dots, \mat[N]{A}}$ for some $\vec{\lambda} \in \R^R$ and some $\mat[n]{A}
    \in \R^{I_n \times R}$ with orthonormal columns. In addition, let $\mat[n, k]{A}$ denote
    the approximation to $\mat[n]{A}$ produced by \Cref{CP-AltLS}
    applied to $\ten{X}$ after $k$ iterations, and define 
    \[
        \epsilon_k \defeq \max_{n \in [N],\, r \in [R]}
        \abs{\sin \angle(\vec[n, k][r]{a}, \vec[n][r]{a})}
    \]
    and $\kappa \defeq {\max_{r \in [R]} \abs{\lambda_r}} / {\min_{r \in [R]}
    \abs{\lambda_r}}$. Then there exists a constant $C > 0$ such that if $\epsilon_0 < C
    \kappa^{-\frac{1}{N-2}} R^{-\frac{1}{N-1}}$, then $\set{\epsilon_k}_{k=0}^\infty$
    converges polynomially to zero. More precisely,
    \begin{enumerate}[label=\textup(\alph*\textup)]
        \item
            $\epsilon_k \leq (c(\kappa, R) \cdot \epsilon_{k-1})^{N-1}$,
            where $c(\kappa, R) \approx \kappa^{\frac{1}{N-1}} R^{\frac{1}{2(N-1)}}$;
        \item
            $\epsilon_k \leq \rho^{(N-1)^k}$ for some $\rho < 1$.
    \end{enumerate}
\end{theorem}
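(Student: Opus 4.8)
\emph{Sketch of the approach.} The plan is to derive a sharp recursion for $\epsilon_k$ by working with the cross-Gram matrices $\mat[n, k]{B} \defeq (\mat[n]{A})^\tp \mat[n, k]{A} \in \R^{R\times R}$, whose diagonal entries are the cosines $\cos\angle(\vec[n, k][r]{a}, \vec[n][r]{a})$ (hence at least $\sqrt{1-\epsilon_k^2}$ in magnitude) and whose off-diagonal entries are the inner products $\inner{\vec[n][r]{a}}{\vec[n, k][s]{a}}$ of an exact factor with a nearby approximate one (hence at most $\epsilon_k$ in magnitude, by orthonormality of the exact factors). Since $\mat{X}_{(n)} = \mat[n]{A}\mat{\Lambda}(\bigkr_{m\neq n}\mat[m]{A})^\tp$ and $(\mat{A}\kr\mat{B})^\tp(\mat{C}\kr\mat{D}) = (\mat{A}^\tp\mat{C})\hd(\mat{B}^\tp\mat{D})$, the unnormalized mode-$n$ update of \Cref{CP-AltLS} in iteration $k$ is
\[
    \mat{\hat{A}}^{(n, k+1)} = \mat{X}_{(n)}\Bigl(\bigkr_{m\neq n}\mat[m, k]{A}\Bigr)\Bigl(\bighd_{m\neq n}\mat[m, k]{G}\Bigr)^{\pinv} = \mat[n]{A}\mat{\Lambda}\,\mat{P}^{(n, k)}, \qquad \mat{P}^{(n, k)} \defeq \Bigl(\bighd_{m\neq n}\mat[m, k]{B}\Bigr)\Bigl(\bighd_{m\neq n}\mat[m, k]{G}\Bigr)^{-1},
\]
where $\mat[m, k]{G}$ has unit diagonal and off-diagonal entries bounded by a constant times $\epsilon_k$ (again by near-orthonormality). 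The crucial point --- and the source of the order-$(N-1)$ rate --- is that each of the two Hadamard products is the entrywise product of $N-1$ matrices of the form ``identity plus an $\bigO{\epsilon_k}$ off-diagonal perturbation'': the diagonals of $\bighd_{m\neq n}\mat[m, k]{B}$ and $\bighd_{m\neq n}\mat[m, k]{G}$ are therefore at least $(1-\epsilon_k^2)^{(N-1)/2}$ and exactly $1$, while their off-diagonal entries are $\bigO{\epsilon_k^{N-1}}$.

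Next I would control the pseudoinverse. For $\epsilon_0$ small enough that $(R-1)$ times the $\bigO{\epsilon_k^{N-1}}$ off-diagonal bound is less than $1$ --- which is where the $R^{-1/(N-1)}$ factor in the hypothesis enters --- the matrix $\mat{H}^{(n, k)} \defeq \bighd_{m\neq n}\mat[m, k]{G}$ is a Hadamard product of positive definite Gram matrices, hence positive definite by the Schur product theorem, and in particular invertible, so the pseudoinverse is the ordinary inverse. Writing $\mat{H}^{(n, k)} = \mat{I} + \mat{E}$ with $\mat{E}$ off-diagonal and $\norm{\mat{E}}_\mathrm{max} = \bigO{\epsilon_k^{N-1}}$, a Neumann series estimate shows that $(\mat{H}^{(n, k)})^{-1}$ has diagonal entries $1 + \bigO{R\,\epsilon_k^{2(N-1)}}$ and off-diagonal entries still $\bigO{\epsilon_k^{N-1}}$. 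Together with the first paragraph, this forces $\mat{P}^{(n, k)}$ to have diagonal entries bounded below by an absolute constant (say $\tfrac{1}{2}$, once $\epsilon_k$ is below an absolute threshold) and off-diagonal entries of size $\bigO{\epsilon_k^{N-1}}$, at most $R-1$ per column. Keeping the off-diagonal part of $\mat{P}^{(n, k)}$ at order $\epsilon_k^{N-1}$ through this inversion while holding its diagonal away from zero is the step I expect to demand the most care.

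Now I would convert this into an angle bound. Column $r$ of $\mat{\hat{A}}^{(n, k+1)} = \mat[n]{A}\mat{\Lambda}\mat{P}^{(n, k)}$ equals $\sum_s \lambda_s P^{(n, k)}_{sr}\,\vec[n][s]{a}$; since the $\vec[n][s]{a}$ are orthonormal, its component along $\vec[n][r]{a}$ has magnitude $\abs{\lambda_r P^{(n, k)}_{rr}} \geq \tfrac{1}{2}\min_{r}\abs{\lambda_r}$, while its component orthogonal to $\vec[n][r]{a}$ has norm $\bigl(\sum_{s\neq r}\lambda_s^2 (P^{(n, k)}_{sr})^2\bigr)^{1/2} = \bigO{\max_{s}\abs{\lambda_s}\sqrt{R}\,\epsilon_k^{N-1}}$. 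Normalization does not change angles, so $\abs{\sin\angle(\vec[n, k+1][r]{a}, \vec[n][r]{a})}$ equals the ratio of these two quantities, which is $\bigO{\kappa\sqrt{R}\,\epsilon_k^{N-1}}$; taking the maximum over $n\in[N]$ and $r\in[R]$ gives $\epsilon_{k+1} \leq a\,\epsilon_k^{N-1}$ with $a = \bigO{\kappa\sqrt{R}}$. Setting $c(\kappa, R) \defeq a^{1/(N-1)} \approx \kappa^{1/(N-1)}R^{1/(2(N-1))}$ then yields part (a).

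Finally, for part (b) I would iterate the recursion: multiplying $\epsilon_{k+1}\leq a\,\epsilon_k^{N-1}$ through by $a^{1/(N-2)}$ shows that $\eta_k \defeq a^{1/(N-2)}\epsilon_k$ satisfies $\eta_{k+1}\leq\eta_k^{N-1}$, so whenever $\eta_0 < 1$ --- equivalently $\epsilon_0 < a^{-1/(N-2)}$, which is where the exponent $\kappa^{-1/(N-2)}$ originates --- induction gives $\eta_k \leq \eta_0^{(N-1)^k}$ and hence, since $a\geq 1$, $\epsilon_k \leq \eta_0^{(N-1)^k}$; setting $\rho \defeq \eta_0 \in (0, 1)$ gives the claimed $\epsilon_k \leq \rho^{(N-1)^k}$. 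It then remains to check that the single hypothesis $\epsilon_0 < C\kappa^{-1/(N-2)}R^{-1/(N-1)}$, for a suitable $C$ depending only on $N$, is strong enough to license every perturbation estimate above at $k = 0$ --- invertibility of $\mat{H}^{(n, 0)}$, the bound $P^{(n, 0)}_{rr}\geq\tfrac{1}{2}$, and $\eta_0<1$ --- after which these estimates hold for all $k$ because $\set{\epsilon_k}_{k=0}^\infty$ is then decreasing; using $N\geq 3$ and $\kappa, R\geq 1$, a short computation confirms this, as $C\kappa^{-1/(N-2)}R^{-1/(N-1)}$ is below each of the individual thresholds (both $\approx \kappa^{-1/(N-2)}R^{-1/(2(N-2))}$ and $\approx R^{-1/(N-1)}$, as well as the absolute one). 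The only genuinely delicate ingredient is the one flagged above; everything else is careful bookkeeping of the $N$-, $R$-, and $\kappa$-dependent constants.
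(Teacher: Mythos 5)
Your proposal follows essentially the same approach as the paper's proof: both pass to the cross-Gram matrices $(\mat[n]{A})^\tp\mat[n, k]{A}$, use the Khatri--Rao/Hadamard identity to extract $N-1$ powers of $\epsilon_k$ from the off-diagonal entries, invert $\mat[n, k]{H}$ via a Neumann series under a diagonal-dominance condition of the form $R\,\epsilon_k^{N-1}\lesssim 1$, and convert the near-identity structure of $\mat{\Lambda}^{-1}\mat[n, k]{V}$ into an angle bound by expanding in the orthonormal basis of exact factors. The only differences are minor bookkeeping choices (entrywise rather than $(1,2)$-norm estimates for the off-diagonal parts, and the Schur product theorem as a redundant preamble to the Neumann-series invertibility argument), which do not alter the resulting $\kappa\sqrt{R}$ recursion constant or the final bootstrapping step.
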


\begin{remark}
    \hfill
    \begin{enumerate}[label=\textup(\roman*\textup)]
        \item
            \Cref{CP-AltLS} does not converge in general for $N = 2$ (i.e., when $\ten{X}$
            is a matrix). 
            For example, suppose that $\lambda_r \defeq 1$ for $r = 1, \dots,
            R$ and $\mat[n]{A} \defeq \mat{I} \in \R^{R \times R}$ for $n = 1, 2$ so
            that $\ten{X} = \mat{I} \in \R^{R \times R}$. In addition, let
            $\mat[n, k]{\check{A}}$ denote $\mat[n, k]{A}$ prior to the normalization step.
            Then, provided that $\mat[1, k-1]{A}$
            and $\mat[2, k-1]{A}$ are invertible, we have
            \begin{align*}
                \mat[1, k]{\check{A}} &= (\mat[2, k-1]{A})^{-\tp}, \\
                \mat[2, k]{\check{A}} &= (\mat[1, k-1]{A})^{-\tp}.
            \end{align*}
            Hence
            \begin{align*}
                \mat[1, k]{A} &= (\mat[2, k-1]{A})^{-\tp} \mat[1, k]{D}, \\
                \mat[2, k]{A} &= (\mat[1, k-1]{A})^{-\tp} \mat[2, k]{D},
            \end{align*}
            for some diagonal matrices $\mat[1, k]{D}, \mat[2, k]{D}$ with positive
            diagonal entries. Therefore
            \begin{align*}
                \mat[1, k+1]{\check{A}} &= (\mat[2, k]{A})^{-\tp} 
                = \mat[1, k-1]{A} (\mat[2, k]{D})^{-\tp}, \\
                \mat[2, k+1]{\check{A}} &= (\mat[1, k]{A})^{-\tp} 
                = \mat[2, k-1]{A} (\mat[1, k]{D})^{-\tp}.
            \end{align*}
            It follows that
            \begin{align*}
                \mat[1, k+1]{A} &= \mat[1, k-1]{A}, \\
                \mat[2, k+1]{A} &= \mat[1, k-1]{A},
            \end{align*}
            so the iteration is 2-periodic in this case.

        \item
            This theorem only shows that the \emph{angle between} $\operatorname{span}
            \set{\vec[n, k][r]{a}}$ and $\operatorname{span} \set{\vec[n][r]{a}}$
            tends to zero -- indeed, CP decompositions are not unique; for instance, the
            signs of the vectors $\vec[n][r]{a}$ can be altered by adjusting
            $\vec{\lambda}$ accordingly.
            For the same reason, the elements of $\vec[k]{\lambda}$ will converge to those
            of $\vec{\lambda}$ only up to signs; see \Cref{C:weights}.
    \end{enumerate}
\end{remark}

Our second main result is that CP-AltLS converges locally \emph{linearly} for
\emph{incoherently} decomposable tensors.

\begin{theorem}[Local convergence of CP-AltLS for incoherently decomposable tensors]
    \label{T:ideco}
    Suppose that $\ten{X} \in \R^{I_1 \times \cdots \times I_N}$ \textup($N \geq 3$\textup)
    is $\mu$-coherently decomposable and write $\ten{X} =
    \tuck[\vec{\lambda}]{\mat[1]{A}, \dots, \mat[N]{A}}$ for some $\vec{\lambda}
    \in \R^R$ and some $\mat[n]{A} \in \R^{I_n \times R}$ with normalized columns and
    coherence at most $\mu$. In addition, let $\mat[n, k]{A}$ denote the approximation to
    $\mat[n]{A}$ produced by \Cref{CP-AltLS} applied to $\ten{X}$ after
    $k$ iterations, and define 
    \[
        \epsilon_k \defeq \max_{n \in [N],\, r \in [R]}
        \abs{\sin \angle(\vec[n, k][r]{a}, \vec[n][r]{a})}
    \]
    and $\kappa \defeq {\max_{r \in [R]} \abs{\lambda_r}} / {\min_{r \in [R]}
    \abs{\lambda_r}}$.
    Then there exists a constant $C > 0$ such that if $\max \set{\epsilon_0, \mu} < C
    \kappa^{-\frac{1}{N-2}} R^{-\frac{2}{N-2}}$,
    then $\set{\epsilon_k}_{k=0}^\infty$ converges linearly to zero.
\end{theorem}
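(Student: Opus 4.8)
The plan is to follow the direct route advertised in the abstract: analyze a single parallel sweep of \Cref{CP-AltLS} one factor column at a time, exploiting the fact that the updated (un-normalized) column is an explicit linear combination of the \emph{true} factors whose coefficient matrix degenerates to a signed identity at the exact decomposition. Fix $n \in [N]$ and $r \in [R]$. Since $\abs{\sin\angle}$ is invariant under scaling and sign, it suffices to bound $\abs{\sin\angle(\vec{u}, \vec{v})}$ where $\vec{v} \defeq \vec[n][r]{a}$, $\vec{u}$ is the $r$\textsuperscript{th} column of $\mat[n]{M}(\mat[n]{H})^\pinv$ at iteration $k$ (that is, $\vec[n, k][r]{a}$ before normalization), and where we assume WLOG (re-signing the columns of each $\mat[m, k-1]{A}$, which affects neither $\epsilon_{k-1}$, $\epsilon_k$, nor $\abs{\sin\angle(\vec{u},\vec{v})}$) that $\cos\angle(\vec[m, k-1][r']{a}, \vec[m][r']{a}) \ge 0$ for all $m, r'$. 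Using $\mat{X}_{(n)} = \mat[n]{A}\,\diag(\vec{\lambda})\,(\bigkr_{m\neq n}\mat[m]{A})^\tp$ and the identity $(\bigkr_{m\neq n}\mat[m]{B})^\tp(\bigkr_{m\neq n}\mat[m]{C}) = \bighd_{m\neq n}(\mat[m]{B})^\tp\mat[m]{C}$ yields
\[
    \vec{u} = \sum_{s=1}^{R}\lambda_s\,P_{sr}\,\vec[n][s]{a},
    \qquad
    \mat{P} \defeq \biggl(\bighd_{m\neq n}(\mat[m]{A})^\tp\mat[m, k-1]{A}\biggr)\biggl(\bighd_{m\neq n}(\mat[m, k-1]{A})^\tp\mat[m, k-1]{A}\biggr)^\pinv \eqdef \mat{Q}\,\mat{H}^\pinv.
\]
When every $\mat[m, k-1]{A}$ equals $\mat[m]{A}$ (up to sign), both Hadamard products collapse to $\bighd_{m\neq n}(\mat[m]{A})^\tp\mat[m]{A}$, so $\mat{P} = \mat{I}$; hence $\D'(\mat{P})$ is precisely the inter-component leakage that must be controlled.

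The crux is the perturbation estimate for $\mat{Q}$ and $\mat{H}$. Writing $\vec[m, k-1][r']{a} = c\,\vec[m][r']{a} + s\,\vec{w}$ with $c = \cos\angle \in [0, 1]$, $\abs{s} \le \epsilon_{k-1}$, and $\vec{w} \perp \vec[m][r']{a}$ of unit norm, a direct entrywise computation shows that the Gram-type matrices $(\mat[m]{A})^\tp\mat[m, k-1]{A}$ and $(\mat[m, k-1]{A})^\tp\mat[m, k-1]{A}$ have diagonal entries equal to $1$ up to $\bigO{\epsilon_{k-1}^2}$, off-diagonal entries of size $\bigO{\mu + \epsilon_{k-1}}$, and — because both reduce to $(\mat[m]{A})^\tp\mat[m]{A}$ at $\epsilon_{k-1} = 0$ — differ from each other entrywise by $\bigO{\epsilon_{k-1}}$. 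Plugging these into the telescoping identity $\prod_m a_m - \prod_m b_m = \sum_m (a_m - b_m)\prod_{m' < m}a_{m'}\prod_{m' > m}b_{m'}$ over the $N - 1$ factors shows that the off-diagonal entries of $\mat{Q}$ and $\mat{H}$ \emph{differ} by only $\bigO{N\,\epsilon_{k-1}(\mu + \epsilon_{k-1})^{N - 2}}$: the factor $(\mu + \epsilon_{k-1})^{N-2}$ is exactly the incoherence damping supplied by the Hadamard structure, and it is this \emph{difference}, rather than the individual sizes $\bigO{(\mu + \epsilon_{k-1})^{N-1}}$, that carries the indispensable factor of $\epsilon_{k-1}$. (Setting $\mu = 0$ recovers the $\epsilon_{k-1}^{N-1}$ behaviour underlying \Cref{T:odeco}.)

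Under the smallness assumption on $\max\set{\epsilon_0, \mu}$ one has $\mat{H} = \mat{I} + \D'(\mat{H})$ with $\norm{\D'(\mat{H})}_1 < \tfrac12$, so $\mat{H}^\pinv = \mat{H}^{-1}$ exists with $\norm{\mat{H}^{-1}}_1 \le 2$; writing $\mat{P} = \mat{I} + (\mat{Q} - \mat{H})\mat{H}^{-1}$ and using the previous step gives $\abs{P_{rr}} \ge \tfrac12$ together with the (deliberately crude but simple) bound $\sum_{s \neq r}\abs{P_{sr}} \le (R - 1)\norm{\D'(\mat{P})}_{\mathrm{max}} \le C_1\,R^2\,\epsilon_{k-1}(\mu + \epsilon_{k-1})^{N-2}$ for an explicit $C_1 = C_1(N)$. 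Splitting $\vec{u}$ into its components parallel and orthogonal to $\vec{v}$ and using $\abs{\inner{\vec[n][s]{a}}{\vec[n][r]{a}}} \le \mu$ for $s \neq r$ then yields
\[
    \abs{\sin\angle(\vec{u}, \vec{v})}
    \le \frac{\bigl(\max_s\abs{\lambda_s}\bigr)\sum_{s \neq r}\abs{P_{sr}}}{\abs{\lambda_r}\,\abs{P_{rr}} - \mu\bigl(\max_s\abs{\lambda_s}\bigr)\sum_{s \neq r}\abs{P_{sr}}} \le 4\kappa\sum_{s \neq r}\abs{P_{sr}},
\]
the last step again invoking smallness. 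Maximizing over $n$ and $r$ gives the one-step contraction $\epsilon_k \le C_2\,\kappa\,R^2\,\epsilon_{k-1}(\mu + \epsilon_{k-1})^{N-2}$.

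It remains to choose $C$ in the hypothesis small enough that $\rho \defeq C_2\,\kappa\,R^2\,(\mu + \epsilon_0)^{N-2} < 1$ — which is precisely the assumed bound $\max\set{\epsilon_0, \mu} < C\kappa^{-1/(N-2)}R^{-2/(N-2)}$ for a suitable $C$ — and small enough that the auxiliary inequalities used above ($\norm{\D'(\mat{H})}_1 < \tfrac12$, $\abs{P_{rr}} \ge \tfrac12$, and $\mu(\max_s\abs{\lambda_s})\sum_{s\neq r}\abs{P_{sr}} \le \tfrac12\abs{\lambda_r}\abs{P_{rr}}$) hold whenever $\epsilon_{k-1} \le \epsilon_0$. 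Then $\epsilon_k \le \rho\,\epsilon_{k-1}$ for every $k$, and since $\set{\epsilon_k}$ is consequently non-increasing, an induction on $k$ re-verifies the hypotheses at each step and delivers $\epsilon_k \le \rho^k\epsilon_0 \to 0$, i.e., linear convergence. The single genuine obstacle — and the only place incoherence enters — is the cancellation in the second paragraph: one must extract the factor $\epsilon_{k-1}$ from $\D'(\mat{P})$, since bounding $\mat{Q}$ and $\mat{H}$ separately would only confine the iterates to an $\bigO{\mu^{(N-1)/(N-2)}}$-neighbourhood of the true factors rather than driving the error to zero; everything else is routine constant-tracking through the telescoping sum and the Neumann series, plus carrying the re-signing and smallness conditions through the induction.
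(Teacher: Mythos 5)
Your proposal is correct and follows essentially the same route as the paper. You write the pre-normalization update column as a linear combination $\sum_s \lambda_s P_{sr}\,\vec[n][s]{a}$ with $\mat{P} = \mat{Q}\mat{H}^{-1}$, observe that $\mat{P}$ collapses to $\mat{I}$ at the exact decomposition, extract the crucial $\epsilon_{k-1}(\mu+\epsilon_{k-1})^{N-2}$ factor from the off-diagonal of $\mat{P}$ via a telescoping bound over the $N-1$ Hadamard factors, control $\mat{H}^{-1}$ by a Neumann series, and finish with a parallel/orthogonal split to convert $\D'(\mat{P})$ into an $\abs{\sin\angle}$ bound; the paper does all of the same, packaged as \Cref{L:innerprod-i}, \Cref{L:inverse}, \Cref{L:product-i}, and \Cref{L:normalization-i}. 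The one cosmetic difference is the algebraic splitting: you compare $\mat{Q}$ and $\mat{H}$ to each other directly, writing $\mat{P} - \mat{I} = (\mat{Q} - \mat{H})\mat{H}^{-1}$ and telescoping over the factors $(\mat[m]{A})^\tp\mat[m, k-1]{A} - (\mat[m, k-1]{A})^\tp\mat[m, k-1]{A}$, whereas the paper compares $\mat{Q}$ and $\mat{H}$ each to the common exact limit $\bighd_{m\neq n}\mat[m]{G}$ and then invokes $(\mat[n]{K})^\tp\mat[n]{K}(\mat[n]{H})^{-1} = \mat{I}$ together with a two-term product estimate. Your version is marginally more economical (one telescoping and no inverse-difference estimate), but it exploits the identical cancellation and yields the same contraction rate $\propto \kappa R^2 (\mu + \epsilon_{k-1})^{N-2}$.
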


Moreover, in both the orthogonal and incoherent cases, the squared weights converge at the
same rate as the factor matrices.

\begin{corollary}[Convergence of weights in CP-AltLS]
    \label{C:weights}
    Under the convergence condition of \Cref{T:odeco} or that of \Cref{T:ideco}, we
    have $(\lambda_r^{(k)})^2 = (\lambda_r)^2 + \bigO{\epsilon_{k-1}}$ for all $r \in [R]$
    \textup(so the weights also converge, up to signs\textup).
\end{corollary}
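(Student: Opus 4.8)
The plan is to track the factor matrix produced at iteration $k$ \emph{before} its columns are renormalized. In the notation of \Cref{CP-AltLS}, at iteration $k$ one has $\mat[n, k]{M} = \mat{X}_{(n)} \mat[n, k]{K}$ with $\mat[n, k]{K} = \bigkr_{m \neq n} \mat[m, k-1]{A}$ and $\mat[n, k]{H} = \bighd_{m \neq n} (\mat[m, k-1]{A})^\tp \mat[m, k-1]{A}$, and the $r$\textsuperscript{th} column of $\mat[n, k]{A}$ prior to normalization is the $r$\textsuperscript{th} column of $\mat[n, k]{M} (\mat[n, k]{H})^\pinv$; by \Cref{R:alg}\,(iii), $\lambda^{(k)}_r$ equals the $2$-norm of this column for the single mode $n$ at which $\vec{\lambda}$ is updated. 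Using the mode-$n$ matricization identity $\mat{X}_{(n)} = \mat[n]{A} \mat{\Lambda} \bigl(\bigkr_{m \neq n} \mat[m]{A}\bigr)^\tp$ together with the mixed-product rule $(\mat{A} \kr \mat{B})^\tp (\mat{C} \kr \mat{D}) = (\mat{A}^\tp \mat{C}) \hd (\mat{B}^\tp \mat{D})$, I would rewrite
\[
    \mat[n, k]{M} = \mat[n]{A} \mat{\Lambda} \mat[n, k]{P},
    \qquad
    \mat[n, k]{P} \defeq \bighd_{m \neq n} \bigl( (\mat[m]{A})^\tp \mat[m, k-1]{A} \bigr),
\]
so that it suffices to show that the $r$\textsuperscript{th} column of $\mat[n]{A} \mat{\Lambda} \mat[n, k]{P} (\mat[n, k]{H})^\pinv$ equals $\lambda_r \vec[n][r]{a} + \bigO{\epsilon_{k-1}}$.

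The crux is the estimate $\norm{\mat[n, k]{P} - \mat[n, k]{H}} = \bigO{\epsilon_{k-1}}$. For each $m \neq n$ one has $(\mat[m]{A})^\tp \mat[m, k-1]{A} - (\mat[m, k-1]{A})^\tp \mat[m, k-1]{A} = (\mat[m]{A} - \mat[m, k-1]{A})^\tp \mat[m, k-1]{A}$, and --- after the harmless sign normalization discussed below, so that $\inner{\vec[m][r]{a}}{\vec[m, k-1][r]{a}} \geq 0$ for all $m$ and $r$ --- the elementary bound $\norm{\vec[m][r]{a} - \vec[m, k-1][r]{a}} \leq \sqrt{2}\,\abs{\sin\angle(\vec[m][r]{a}, \vec[m, k-1][r]{a})} \leq \sqrt{2}\,\epsilon_{k-1}$ gives $\norm{\mat[m]{A} - \mat[m, k-1]{A}} = \bigO{\epsilon_{k-1}}$. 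Since every Hadamard factor has entries bounded by $1$ (Cauchy--Schwarz, normalized columns) and the $\mat[m, k-1]{A}$ have bounded spectral norm, a telescoping/multilinearity estimate for Hadamard products --- of the type collected in \Cref{appendix} --- then gives $\norm{\mat[n, k]{P} - \mat[n, k]{H}} = \bigO{\epsilon_{k-1}}$. The proofs of \Cref{T:odeco,T:ideco} already establish that $\mat[n, k]{H}$ is invertible with $\norm{(\mat[n, k]{H})^\pinv} = \norm{(\mat[n, k]{H})^{-1}} = \bigO{1}$ under their respective hypotheses (indeed, $\mat[n, k]{H}$ is strictly diagonally dominant there), so $\mat[n, k]{P} (\mat[n, k]{H})^\pinv - \mat{I} = (\mat[n, k]{P} - \mat[n, k]{H}) (\mat[n, k]{H})^{-1} = \bigO{\epsilon_{k-1}}$; multiplying on the left by $\mat[n]{A} \mat{\Lambda}$ shows that the $r$\textsuperscript{th} column of $\mat[n, k]{A}$ before normalization is $\lambda_r \vec[n][r]{a} + \bigO{\epsilon_{k-1}}$, as needed.

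To finish, since $\norm{\vec[n][r]{a}} = 1$ the reverse triangle inequality gives $\bigabs{\lambda^{(k)}_r - \abs{\lambda_r}} = \bigO{\epsilon_{k-1}}$; writing $(\lambda^{(k)}_r)^2 - \lambda_r^2 = (\lambda^{(k)}_r - \abs{\lambda_r})(\lambda^{(k)}_r + \abs{\lambda_r})$ and bounding the second factor by $2 \abs{\lambda_r} + \bigO{\epsilon_{k-1}} = \bigO{1}$ yields $(\lambda^{(k)}_r)^2 = \lambda_r^2 + \bigO{\epsilon_{k-1}}$. There is no real obstacle here; the one point that genuinely requires care is the sign normalization, since each $\mat[m, k-1]{A}$ is close to $\mat[m]{A}$ only up to a sign on each column --- so I would first absorb diagonal sign matrices into the $\mat[m]{A}$, compensating with the corresponding entries of $\vec{\lambda}$, which changes neither $\ten{X}$, nor any $\abs{\lambda_r}$, nor $\kappa$, nor $\epsilon_{k-1}$ (the sine of an angle is insensitive to flipping an argument). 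Everything else is bookkeeping with norm bounds already in hand from \Cref{T:odeco,T:ideco}, and all hidden constants depend only on $N$, $R$, $\kappa$, $\max_{r} \abs{\lambda_r}$, and, in the incoherent case, $\mu$.
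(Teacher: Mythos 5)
Your proposal is correct and follows essentially the same route as the paper: both arguments express the pre-normalization $r$\textsuperscript{th} column as $\mat[n]{A}\mat{\Lambda}\mat[n,k]{V}\,\vec{e}_r$ with $\mat{\Lambda}^{-1}\mat[n,k]{V}=\mat[n,k]{P}(\mat[n,k]{H})^{-1}=\mat{I}+\bigO{\epsilon_{k-1}}$, using the estimates already established in the proofs of \Cref{T:odeco,T:ideco}, and then read off the weight. The only cosmetic difference is the last step --- the paper expands the quadratic form $(\mat[n,k]{V}^\tp\mat[n]{G}\mat[n,k]{V})_{rr}$ term by term, while you bound $\lambda_r^{(k)}=\norm{\vec[n,k][r]{a}}$ via the reverse triangle inequality and then factor the difference of squares --- but these are equivalent computations, and your handling of the sign normalization and the implicit constants is sound.
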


\subsection{Local convergence of CP-AltLS for orthogonally decomposable tensors} \label{SS:odeco}

Let us first outline the proof of \Cref{T:odeco}.
We begin by noting that \emph{before normalization}, the approximate factor matrix $\mat[n,
k]{A}$ is equal to $\mat{X}_{(n)} \mat[n, k]{K} (\mat[n, k]{H})^\pinv$, where 
\begin{align*}
    \mat[n, k]{G} &= (\mat[n, k-1]{A})^\tp \mat[n, k-1]{A}, \\
    \mat[n, k]{K} &= \bigkr_{m \neq n} \mat[m, k-1]{A}, \\
    \mat[n, k]{H} &= \bighd_{m \neq n} \mat[m, k]{G}.
\end{align*}
Since $\mat{X}_{(n)} = \mat[n]{A} \mat{\Lambda} (\mat[n]{K})^\tp$, where
$\mat{\Lambda} \defeq \diag(\vec{\lambda})$ and $\mat[n]{K} \defeq \bigkr_{m \neq n}
\mat[m]{A}$, the approximate factor matrix prior to normalization can be written as
$\mat[n]{A} \mat[n, k]{V}$, where
\[
    \mat[n, k]{V} \defeq \mat{\Lambda} (\mat[n]{K})^\tp \mat[n, k]{K} (\mat[n, k]{H})^\pinv.
\]
% For orthogonally decomposable tensors, the columns of
% $\mat[n]{A}$ are orthonormal, which implies that \emph{after normalization}, we
% have
% \[
%     \mat[n, k]{A} = \mat[n]{A} \mat[n, k]{\hat{V}},
% \]
% where $\mat[n, k]{\hat{V}}$ denotes the column-normalized version of $\mat[n, k]{V}$.
The quantity $\epsilon_k$, then, is determined by the angles between the columns of
$\mat[n]{A} \mat[n, k]{V}$ and those of $\mat[n]{A}$ because these angles are
preserved when the columns of the former are normalized.

To show that $\epsilon_k$ tends to zero, we successively estimate these matrix
products. For the purpose of our analysis, we may assume that the angles between the columns
are acute (by adjusting the signs of the elements of $\vec{\lambda}$), so that smallness
of $\epsilon_{k-1}$ in fact means that $\mat[n, k-1]{A} \approx \mat[n]{A}$. From
this and the fact that $(\mat[n]{A})^\tp \mat[n]{A} = \mat{I}$, we deduce that
\begin{gather*}
    (\mat[n]{A})^\tp \mat[n, k-1]{A} \approx \mat{I}, \\
    \mat[n, k]{G} = (\mat[n, k-1]{A})^\tp \mat[n, k-1]{A} \approx \mat{I},
\end{gather*}
in a way that is quantified by \Cref{L:innerprod-o,L:innerprod-i}. 
It follows from the nature of the Khatri--Rao and Hadamard products that
\begin{gather*}
    (\mat[n]{K})^\tp \mat[n, k]{K} = \bighd_{m \neq n} (\mat[m]{A})^\tp \mat[m, k-1]{A} 
    \approx \mat{I}, \\
    \mat[n, k]{H} = \bighd_{m \neq n} \mat[m, k]{G}
    \approx \mat{I}.
\end{gather*}
Consequently,
\begin{gather*}
    (\mat[n, k]{H})^\pinv = (\mat[n, k]{H})^{-1} \approx \mat{I},
\end{gather*}
as concretized by \Cref{L:inverse} and \Cref{C:inverse}. We combine these estimates using
\Cref{L:product-o} to obtain
\begin{gather*}
    \mat{\Lambda}^{-1} \mat[n, k]{V} = 
    (\mat[n]{K})^\tp \mat[n, k]{K} (\mat[n, k]{H})^\pinv \approx \mat{I}.
\end{gather*}
Finally, \Cref{L:normalization-o} implies that $\epsilon_k$, in turn, is small, essentially
because $\mat[n, k]{V} \approx \mat{\Lambda}$ so that the columns of $\mat[n]{A}
\mat[n, k]{V}$ are approximately parallel to those of $\mat[n]{A}$. It is then a
matter of showing that these estimates hold, and that $\epsilon_k$ decreases, when
$\epsilon_0$ is sufficiently small.

\begin{proof}[Proof of \Cref{T:odeco}]
    Without loss of generality, we may assume that $\inner{\vec[n,
    k][r]{a}}{\vec[n][r]{a}} \allowbreak\geq 0$ for all $n$ and $r$ by absorbing the signs of the
    inner products into $\vec{\lambda}$, because this preserves the quantities $\abs{\sin
    \angle(\vec[n, k][r]{a}, \vec[n][r]{a})}$.

    By \Cref{L:innerprod-o,L:innerprod-i} applied to $\mat[n, k-1]{A}$ and $\mat[n]{A}$, we have
    \begin{align*}
        \norm{(\mat[n]{A})^\tp \mat[n, k-1]{A} - \mat{I}}_\mathrm{max} &\leq \sqrt{2} \epsilon_{k-1}, \\
        \norm{\mat[n, k]{G} - \mat{I}}_\mathrm{max} &\leq 2\sqrt{2}\epsilon_{k-1}, \\
        \norm{\D'((\mat[n]{A})^\tp \mat[n, k-1]{A})}_\mathrm{1, 2} &\leq \epsilon_{k-1}.
    \end{align*}
    Since $(\mat[n]{K})^\tp \mat[n, k]{K} = \bighd_{m \neq n} (\mat[m]{A})^\tp
    \mat[m, k-1]{A}$, it follows from the first and third inequalities that
    \begin{align*}
        \norm{\D((\mat[n]{K})^\tp \mat[n, k]{K}) - \mat{I}}_\mathrm{max} 
        &\leq 1 - (1 - \sqrt{2}\epsilon_{k-1})^{N-1}, \\
        \norm{\D'((\mat[n]{K})^\tp \mat[n, k]{K})}_{1, 2} 
        &\leq \epsilon_{k-1}^{N-1},
    \end{align*}
    assuming that
    \[
        \sqrt{2}\epsilon_{k-1} \leq 1.
    \]
    On the other hand, since $\mat[n, k]{H} = \bighd_{m \neq n} \mat[m, k]{G}$, it follows
    from the second inequality that $\norm{\D'(\mat[n, k]{H})}_\mathrm{max} \leq
    (2\sqrt{2}\epsilon_{k-1})^{N-1}$. Hence, by \Cref{C:inverse}, $\mat[n, k]{H}$ is
    invertible and
    \[
        \norm{(\mat[n, k]{H})^{-1} - \mat{I}}_\mathrm{max} \leq 
        \frac{(2\sqrt{2} \epsilon_{k-1})^{N-1}}{1 - (R-1)(2\sqrt{2} \epsilon_{k-1})^{N-1}}
    \]
    provided that
    \[
        (R-1)(2\sqrt{2} \epsilon_{k-1})^{N-1} < 1.
    \]

    Now suppose that $R(2\sqrt{2} \epsilon_{k-1})^{N-1} \leq C$ for some constant $0 < C \ll 1$ to
    be determined later (which implies in particular that $2\sqrt{2} \epsilon_{k-1} \leq 1$). Then
    \begin{align*}
        \norm{\D((\mat[n]{K})^\tp \mat[n, k]{K}) - \mat{I}}_\mathrm{max} 
        &\leq 1 - \left(\frac{1}{2}\right)^{N-1} \eqdef \epsilon_\mathrm{K}, \\
        \norm{\D'((\mat[n]{K})^\tp \mat[n, k]{K})}_{1, 2} 
        &\leq \epsilon_{k-1}^{N-1} \eqdef \epsilon_\mathrm{K}', \\
        \norm{(\mat[n, k]{H})^{-1} - \mat{I}}_\mathrm{max} 
        &\leq 
        \frac{(2\sqrt{2}\epsilon_{k-1})^{N-1}}{1 - C} \eqdef \epsilon_\mathrm{H} \leq
        \frac{C}{1-C}.
    \end{align*}
    Combining these estimates for $(\mat[n]{K})^\tp \mat[n, k]{K}$ and 
    $(\mat[n, k]{H})^{-1}$ using \Cref{L:product-o}, we obtain
    \begin{align*}
        \norm{\D(\mat{\Lambda}^{-1} \mat[n, k]{V}) - \mat{I}}_\mathrm{max}
        &\leq
        1 - (1 - \epsilon_\mathrm{K})(1 - \epsilon_\mathrm{H}) + 
        (R-1) \epsilon_\mathrm{K}' \epsilon_\mathrm{H}^{} \\
        &\leq
        1 - \left(\frac{1}{2}\right)^{N-1}\left(1 - \frac{C}{1-C}\right) + 
        \frac{C}{(2\sqrt{2})^{N-1}} \cdot \frac{C}{1-C} \\
        &\leq
        1 - \left(1 - \frac{C}{1-C} - \frac{C^2}{1-C}\right)\left(\frac{1}{2}\right)^{N-1} \\
    \end{align*}
    and
    \begin{align*}
        \norm{\D'(\mat{\Lambda}^{-1} \mat[n, k]{V})}_\mathrm{1, 2}
        &\leq
        (R-1)^{\frac{1}{2}} \epsilon_\mathrm{H}^{} + 
        \epsilon_\mathrm{K}' (1 + \epsilon_\mathrm{H}^{}) +
        (R-1) \epsilon_\mathrm{K}' \epsilon_\mathrm{H}^{} \\
        &=
        (R-1)^{\frac{1}{2}} \epsilon_\mathrm{H}^{} + 
        \epsilon_\mathrm{K}' (1 + R\epsilon_\mathrm{H}^{}) \\
        &\leq
        R^{\frac{1}{2}} \cdot \frac{(2\sqrt{2}\epsilon_{k-1})^{N-1}}{1-C} + 
        (2\sqrt{2}\epsilon_{k-1})^{N-1} \left(1 + \frac{C}{1-C}\right) \\
        &\leq
        \left(\frac{1}{1-C} + 1 + \frac{C}{1-C}\right)
        R^{\frac{1}{2}} (2\sqrt{2}\epsilon_{k-1})^{N-1}.
    \end{align*}
    Finally, applying \Cref{L:normalization-o} to $\mat[n]{A} \mat[n, k]{V}$, we arrive at
    \[
        \epsilon_k
        % =
        % \norm{\D'((\mat[n]{A})^\tp \mat[n, k]{A})}_{1, 2}
        \leq 
        \frac{2}{1 - 2C - C^2} \cdot
        \kappa R^{\frac{1}{2}} (4\sqrt{2}\epsilon_{k-1})^{N-1}.
    \]
    One can verify that the choice $C = \frac{1}{3}$ is sufficiently small for the lemmas to
    be applicable, which yields
    \begin{equation} \label{polyconv}
        \epsilon_k
        \leq 
        9
        \kappa R^{\frac{1}{2}} (4\sqrt{2}\epsilon_{k-1})^{N-1}
        \quad
        \text{if $R(2\sqrt{2}\epsilon_{k-1})^{N-1} \leq \frac{1}{3}$.}
    \end{equation}

    Indeed, if $\epsilon_0 < C \kappa^{-\frac{1}{N-2}} R^{-\frac{1}{N-1}}$ for some constant
    $C > 0$ (not necessarily equal to the one above) with $9(4\sqrt{2})^2 \cdot C \leq
    1$, then $R(2\sqrt{2}\epsilon_0)^{N-1} < (2\sqrt{2}C \kappa^{-\frac{1}{N-2}})^{N-1} < (3C)^{N-1} \leq
    \frac{1}{3}$.  Moreover, $9 \kappa R^{\frac{1}{2}} (4\sqrt{2} \epsilon_{k-1})^{N-1}
    \leq \epsilon_{k-1}$ if and only if $\epsilon_{k-1} \leq 9^{-\frac{1}{N-2}} \cdot \allowbreak
    (4\sqrt{2})^{-\frac{N-1}{N-2}} \kappa^{-\frac{1}{N-2}} R^{-\frac{1}{2(N-2)}}$, which
    holds for $k = 1$ since $N \geq 3$. Thus, by bootstrapping, inequality
    \labelcref{polyconv} holds for all $k$, which proves part (a). For part (b), we
    iterate this inequality $k$ times to conclude that
    \[
        \epsilon_k 
        \leq 
        \left(9\kappa R^{\frac{1}{2}} (4\sqrt{2})^{N-1}\right)^{\frac{(N-1)^k
        - 1}{N-2}} \epsilon_0^{(N-1)^k} 
        \leq 
        \left(9\kappa^{\frac{1}{N-2}} R^{\frac{1}{2(N-2)}} (4\sqrt{2})^2
        \epsilon_0\right)^{(N-1)^k}. \qedhere
    \]
\end{proof}

% \begin{lemma}
%     Let $\mat{A} \in \R^{n \times n}$. If $\norm{\D(\mat{\Lambda}^{-1} \mat{A}) -
%     \mat{I}}_\mathrm{max} \leq \epsilon < 1$ and $\norm{\D'(\mat{\Lambda}^{-1}
%     \mat{A})}_\mathrm{1, 2} \leq \epsilon'$ for some diagonal matrix $\mat{\Lambda} =
%     \diag(\lambda_1, \dots, \lambda_n)$, then the column-normalized version
%     $\mat{\hat{A}}$ of $\mat{A}$ satisfies
%     \[
%         \norm{\D'(\mat{\hat{A}})}_\mathrm{1, 2} 
%         \leq 
%         \frac{\kappa \epsilon'}{1 - \epsilon},
%     \]
%     where $\kappa \defeq \max_{j \in [n]} \abs{\lambda_j} / \min_{j \in [n]}
%     \abs{\lambda_j}$.
% \end{lemma}
% 
% \begin{proof}
%     By the first part of the hypothesis,
%     $\abs{a_{jj}/\lambda_j - 1} \leq \epsilon < 1$ for all $j \in [n]$, so $\abs{a_{jj}}
%     \geq \abs{\lambda_j} - \abs{a_{jj} - \lambda_j} \geq (1 - \epsilon) \abs{\lambda_j} \geq
%     (1 - \epsilon) \min_j \abs{\lambda_j}$. By the second part of the hypothesis, $(\sum_{i
%     \neq j} \abs{a_{ij}/\lambda_i}^2)^{\frac{1}{2}} \leq \epsilon'$ for all $j \in [n]$, so
%     $(\sum_{i \neq j} \abs{a_{ij}}^2)^{\frac{1}{2}} \leq \epsilon' \max_j \abs{\lambda_j}$.
%     The conclusion then follows from these inequalities along with the fact that
%     $\norm{\D'(\mat{\hat{A}})}_{1, 2} = \max_{j \in [n]} (\sum_{i \neq j}
%     \abs{a_{ij}}^2/\norm{\vec{a}_j}^2)^{\frac{1}{2}} \leq \max_{j \in [n]} (\sum_{i \neq j}
%     \abs{a_{ij}}^2/\abs{a_{jj}}^2)^{\frac{1}{2}}$.
% \end{proof}

\subsection{Local convergence of CP-AltLS for incoherently decomposable tensors} \label{SS:ideco}

Our approach in the incoherent case is the same as that in the orthogonal case, with the
requisite modifications to account for the coherence of the factor matrices. In particular, it is
useful to define
\begin{align*}
    \mat[n]{G} &\defeq (\mat[n]{A})^\tp \mat[n]{A}, \\
    \mat[n]{H} &\defeq \bighd_{m \neq n} \mat[m]{G},
\end{align*}
by analogy with $\mat[n, k]{G}$ and $\mat[n, k]{H}$, as these matrices are no longer
necessarily the identity matrix.
The estimate $\mat[n, k-1]{A} \approx \mat[n]{A}$ derived from the smallness of
$\epsilon_{k-1}$ then leads to
\begin{gather*}
    (\mat[n]{A})^\tp \mat[n, k-1]{A} \approx \mat[n]{G}, \\
    \mat[n, k]{G} = (\mat[n, k-1]{A})^\tp \mat[n, k-1]{A} \approx \mat[n]{G}.
\end{gather*}
Forming the Khatri--Rao and Hadamard products yields
\begin{gather*}
    (\mat[n]{K})^\tp \mat[n, k]{K} 
    = \bighd_{m \neq n} (\mat[m]{A})^\tp \mat[m, k-1]{A}
    \approx \bighd_{m \neq n} \mat[m]{G} 
    = (\mat[n]{K})^\tp \mat[n]{K}, \\
    \mat[n, k]{H} 
    = \bighd_{m \neq n} \mat[m, k]{G}
    \approx \bighd_{m \neq n} \mat[m]{G} = \mat[n]{H}.
\end{gather*}
Consequently,
\begin{gather*}
    (\mat[n, k]{H})^\pinv = (\mat[n, k]{H})^{-1} \approx 
    (\mat[n]{H})^{-1}.
\end{gather*}
We combine these estimates using \Cref{L:product-i} to obtain
\begin{gather*}
    \mat{\Lambda}^{-1} \mat[n, k]{V} 
    = (\mat[n]{K})^\tp \mat[n, k]{K} (\mat[n, k]{H})^\pinv 
    \approx (\mat[n]{K})^\tp \mat[n]{K} (\mat[n]{H})^{-1} 
    = \mat{I}.
\end{gather*}
Finally, we conclude via \Cref{L:normalization-i} that $\epsilon_k$ is small.

\begin{proof}[Proof of \Cref{T:ideco}]
    As in the proof of \Cref{T:odeco}, we assume without loss of generality that
    $\inner{\vec[n, k][r]{a}}{\vec[n][r]{a}} \geq 0$ for all $n$ and $r$.

    By \Cref{L:innerprod-i} applied to $\mat[n, k-1]{A}$ and $\mat[n]{A}$, we have
    \begin{align*}
        \norm{(\mat[n]{A})^\tp \mat[n, k-1]{A} - \mat[n]{G}}_\mathrm{max} &\leq
        \sqrt{2} \epsilon_{k-1}, \\
        \norm{\mat[n, k]{G} - \mat[n]{G}}_\mathrm{max} &\leq 2\sqrt{2} \epsilon_{k-1}.
    \end{align*}
    Since $(\mat[n]{K})^\tp \mat[n, k]{K} = \bighd_{m \neq n} (\mat[m]{A})^\tp
    \mat[m, k-1]{A}$ and $(\mat[n]{K})^\tp \mat[n]{K} = \bighd_{m \neq n}
    \mat[m]{G}$, it follows from the first inequality and the triangle inequality that
    \begin{align*}
        % \norm{(\mat[n]{K})^\tp \mat[n, k]{K} - (\mat[n]{K})^\tp \mat[n]{K}}_\mathrm{max} 
        % &\leq \sqrt{2} (N-1) \epsilon_{k-1}, \\
        \norm{\D((\mat[n]{K})^\tp \mat[n, k]{K} - (\mat[n]{K})^\tp
        \mat[n]{K})}_\mathrm{max} 
          &\leq 1 - (1 - \sqrt{2} \epsilon_{k-1})^{N-1} \eqdef \epsilon_\mathrm{K}, \\
        \norm{\D'((\mat[n]{K})^\tp \mat[n, k]{K} - (\mat[n]{K})^\tp
        \mat[n]{K})}_\mathrm{max}
          &\leq \alpha \epsilon_{k-1} \eqdef \epsilon_\mathrm{K}',
    \end{align*}
    where
    \[
        \alpha \defeq \sqrt{2}(N-1)(\sqrt{2}\epsilon_{k-1} + \mu)^{N-2}.
    \]
    On the other hand, since $\mat[n, k]{H} = \bighd_{m \neq n} \mat[m, k]{G}$ and
    $\mat[n]{H} = \bighd_{m \neq n} \mat[m]{G}$, it follows from the second
    inequality and the triangle inequality that $\norm{\D'(\mat[n, k]{H} -
    \mat[n]{H})}_\mathrm{max} \leq 2\alpha \epsilon_{k-1}.$ Hence, by
    \Cref{L:inverse}, $\mat[n, k]{H}$ and $\mat[n]{H}$ are invertible and
    \begin{align*}
        \norm{(\mat[n, k]{H})^{-1} - (\mat[n]{H})^{-1}}_\mathrm{max} 
        &\leq 
        \frac{2\alpha\epsilon_{k-1}}{[1 - (R-1)(2\alpha\epsilon_{k-1} +
        \mu^{N-1})]^2} \eqdef \epsilon_\mathrm{H}, \\
        \norm{(\mat[n]{H})^{-1}}_\mathrm{max} 
        &\leq 
        \frac{1}{1 - (R-1)\mu^{N-1}}, \\
        \norm{\D'((\mat[n]{H})^{-1})}_\mathrm{max} 
        &\leq 
        \frac{(R-1)\mu^{N-1}}{1 - (R-1)\mu^{N-1}},
    \end{align*}
    provided that
    \[
        (R-1)(2\alpha\epsilon_{k-1} + \mu^{N-1}) < 1.
    \]

    Now suppose that $\max \set{\epsilon_{k-1}, \mu} \leq C \kappa^{-\frac{1}{N-2}}
    R^{-\frac{2}{N-2}}$ 
    for some constant $0 < C \ll 1$ to be determined later, and for the sake of definiteness,
    assume that
    \begin{equation} \label{cond1}
        R \cdot 2\alpha \epsilon_{k-1} \leq \frac{1}{4},\quad
        R \cdot \mu^{N-1} \leq \frac{1}{4}.
    \end{equation}
    Using \Cref{L:product-i} to bound the norms of
    \[
        (\mat[n]{K})^\tp \mat[n, k]{K} (\mat[n, k]{H})^{-1} - (\mat[n]{K})^\tp
        \mat[n]{K} \allowbreak (\mat[n]{H})^{-1} =
        \mat{\Lambda}^{-1} \mat[n, k]{V} - \mat{I},
    \]
    we obtain
    \begin{align*}
        \norm{\D(\mat{\Lambda}^{-1} \mat[n, k]{V}) - \mat{I}}_\mathrm{max}
        &\leq
        (R-1)(\alpha \epsilon_{k-1} + \mu^{N-1}) \cdot \epsilon_\mathrm{H}^{} +
        \epsilon_\mathrm{H}^{} + {} \\
        &\hphantom{{}\leq{}}
        (R-1) \frac{(R-1) \mu^{N-1}}{1 - (R-1) \mu^{N-1}} \cdot \epsilon_\mathrm{K}' +
        \frac{1}{1 - (R-1) \mu^{N-1}} \cdot \epsilon_\mathrm{K}^{} \\
        &\leq
        \frac{\epsilon_\mathrm{K}}{1 - R \mu^{N-1}} +
        \left(\frac{1}{3} R + 11\right) \alpha \epsilon_{k-1} \\
        &\leq
        \frac{\epsilon_\mathrm{K}}{1 - R \mu^{N-1}} +
        \frac{34}{3} R \alpha \epsilon_{k-1}
        \eqdef \epsilon \stepcounter{equation}\tag{\theequation}\label{diag-lambda}
    \end{align*}
    and
    \begin{align*}
        \norm{\D'(\mat{\Lambda}^{-1} \mat[n, k]{V})}_\mathrm{max}
        &\leq
        (R-1) \cdot \epsilon_\mathrm{H} + 
        (\alpha \epsilon_{k-1} + \mu^{N-1}) \cdot \epsilon_\mathrm{H}^{} + {} \\
        &\hphantom{{}\leq{}}
        (R-1) \frac{1}{1 - (R-1) \mu^{N-1}} \cdot \epsilon_\mathrm{K}' + 
        \frac{(R-1) \mu^{N-1}}{1 - (R-1) \mu^{N-1}} \cdot \epsilon_\mathrm{K}^{} \\
        &\leq
        \frac{4}{3} R \mu^{N-1} \epsilon_\mathrm{K} +
        \left(\frac{28}{3}R + 3\right) \alpha \epsilon_{k-1} \\
        &\leq
        \frac{4}{3} R \mu^{N-1} \sqrt{2}(N-1) \epsilon_{k-1} +
        \left(\frac{28}{3}R + 3\right) \alpha \epsilon_{k-1} \\
        &\leq
        \frac{41}{3} R \alpha \epsilon_{k-1}
        \eqdef \epsilon'. \stepcounter{equation}\tag{\theequation}\label{offdiag-lambda}
    \end{align*}
    Then, applying \Cref{L:normalization-i}, we arrive at
    \[
        \epsilon_k
        \leq 
        \left(\frac{2}{(1-\epsilon)^2 - 4(R-1)\kappa\epsilon' - [(R-1)\kappa\epsilon']^2}\right)^\frac{1}{2}
        (R-1)\kappa \epsilon',
    \]
    provided that
    \[
        \epsilon < 1, \quad
        4(R-1)\kappa\epsilon' + [(R-1)\kappa\epsilon']^2 < (1 - \epsilon)^2.
    \]
    Thus, if 
    \[
        \beta \defeq \frac{41}{3} \kappa R^2 \alpha
    \]
    so that $(R-1) \kappa \epsilon' \leq \beta \epsilon_{k-1} \leq \beta$, and if we also assume that
    \begin{equation} \label{cond2}
        (13 \beta)^{\frac{1}{2}} + \epsilon \leq 1,
    \end{equation}
    then we have linear convergence:
    \[
        \epsilon_k \leq 
        \left(\frac{2}{(1-\epsilon)^2 - 5\beta}\right)^\frac{1}{2}
        \beta \epsilon_{k-1} \leq
        \left(\frac{2}{8 \beta}\right)^\frac{1}{2}
        \beta^{\frac{1}{2}} \epsilon_{k-1} =
        \frac{1}{2} \epsilon_{k-1}.
    \]

    It remains to show that conditions \labelcref{cond1,cond2} are satisfied for a
    sufficiently small choice of $C$. Since $\alpha \leq ((4 + 2\sqrt{2}) \max
    \set{\epsilon_{k-1}, \mu})^{N-2}$,
    condition \labelcref{cond1} is satisfied when $C \leq [8(4 + 2\sqrt{2})]^{-1} \approx
    1.8 \times 10^{-2}$.
    As for condition \labelcref{cond2}, we note that
    \begin{align*}
        (13\beta)^\frac{1}{2} + \epsilon
        &\leq
        \frac{(13\beta)^\frac{1}{2} + 1 - (1 - \sqrt{2} C)^{N-1} + 
        \frac{34}{3} R\alpha\epsilon_{k-1}}{1 - R\mu^{N-1}} \\
        &\leq
        \frac{(13\beta)^\frac{1}{2} + 1 - i(1 - \sqrt{2} C)^{N-1} + 
        \beta^\frac{1}{2}}{1 - \beta^\frac{1}{2}},
    \end{align*}
    provided that $\beta < 1$, so this condition will be satisfied if
    \[
        (\sqrt{13} + 2)^\frac{1}{N-1} \beta^\frac{1}{2(N-1)} + \sqrt{2} C
        \leq 1.
    \]
    Finally,
    \[
        (\sqrt{13} + 2)^\frac{1}{N-1} \beta^\frac{1}{2(N-1)} + \sqrt{2} C
        \leq
        (\sqrt{13} + 2)^\frac{1}{2} \left(\frac{41}{3} (4 + 2\sqrt{2}) C\right)^\frac{1}{4}
        + \sqrt{2} C,
    \]
    where the right-hand side is less than $1$ for $C \leq
    [(\sqrt{13} + 2)^\frac{1}{2}  (\frac{41}{3} (4 + 2\sqrt{2}))^\frac{1}{4} +
    \sqrt{2}]^{-4} \approx 1.7 \times 10^{-4}$.
\end{proof}

\begin{proof}[Proof of \Cref{C:weights}]
    We consider the incoherent case only; the interested reader may verify that the same
    argument applies in the orthogonal case. 

    By definition,
    \[
        (\lambda_r^{(k)})^2 
        = \left((\mat[n]{A} \mat[n, k]{V})^\tp (\mat[n]{A} \mat[n, k]{V})\right)_{rr}
        = \left((\mat[n, k]{V})^\tp \mat[n]{G} (\mat[n, k]{V})\right)_{rr}
    \]
    for some $n$ (see \Cref{R:alg}), 
    and from estimates \labelcref{diag-lambda,offdiag-lambda} in
    the proof of \Cref{T:ideco}, we know that $v_{ii}^{(n, k)} = \lambda_i +
    \bigO{\epsilon_{k-1}}$ and $v_{ij}^{(n, k)} = \bigO{\epsilon_{k-1}}$ for $j \neq i$.
    Hence (omitting superscripts for readability)
    \begin{align*}
        (\lambda_r^{(k)})^2
        = \sum_{i,\,j} v_{ir} g_{ij} v_{jr}
        &= v_{rr} g_{rr} v_{rr} +
        \sum_{\substack{i,\,j \\ i = r,\,j \neq r}} v_{ir} g_{ij} v_{jr} +
        \sum_{\substack{i,\,j \\ i \neq r}} v_{ir} g_{ij} v_{jr} \\
        &= (\lambda_r)^2 + \bigO{\epsilon_{k-1}}.
        \qedhere
    \end{align*}
\end{proof}

\subsection{Local convergence of CP-SAltLS} \label{SS:convergence-saltls}

The arguments in the preceding subsections in fact show that \Cref{T:odeco,T:ideco} hold for
CP-SAltLS (\Cref{CP-SAltLS}) as well. Indeed, in the serial algorithm, the approximate
factor matrices prior to normalization are still equal to
\[
    \mat[n]{A} \mat[n, k]{V} 
    = \mat[n]{A} \mat{\Lambda} (\mat[n]{K})^\tp \mat[n, k]{K} (\mat[n, k]{H})^\pinv.
\]
However,
\begin{align*}
    \mat[n, k]{K} &= \left(\bigkr_{m > n} \mat[m, k-1]{A}\right) \kr \left(\bigkr_{m < n}
    \mat[m, k]{A}\right), \\
    \mat[n, k]{H} &= \left(\bighd_{m > n} \mat[m, k]{G}\right) \hd \left(\bighd_{m < n}
    \mat[m, k-1]{G}\right), \\
    \mat[n, k]{G} &= (\mat[n, k]{A})^\tp \mat[n, k]{A}.
\end{align*}
Nevertheless, in the orthogonal case, we have
\begin{align*}
    \norm{(\mat[n]{A})^\tp \mat[n, k]{A} - \mat{I}}_\mathrm{max} &\leq \sqrt{2} \epsilon_{k}, &
    \norm{(\mat[n]{A})^\tp \mat[n, k-1]{A} - \mat{I}}_\mathrm{max} &\leq \sqrt{2} \epsilon_{k-1}, \\
    \norm{\mat[n, k]{G} - \mat{I}}_\mathrm{max} &\leq 2\sqrt{2}\epsilon_{k}, &
    \norm{\mat[n, k-1]{G} - \mat{I}}_\mathrm{max} &\leq 2\sqrt{2}\epsilon_{k-1}, \\
    \norm{\D'((\mat[n]{A})^\tp \mat[n, k]{A})}_\mathrm{1, 2} &\leq \epsilon_{k}, &
    \norm{\D'((\mat[n]{A})^\tp \mat[n, k-1]{A})}_\mathrm{1, 2} &\leq \epsilon_{k-1},
\end{align*}
so the same proof applies if $\epsilon_k \leq \epsilon_{k-1}$, which is guaranteed by
the hypothesis on the initial approximate factor matrices.
In the incoherent case, we have
\begin{align*}
    \norm{(\mat[n]{A})^\tp \mat[n, k]{A} - \mat[n]{G}}_\mathrm{max} &\leq \sqrt{2} \epsilon_{k}, &
    \norm{(\mat[n]{A})^\tp \mat[n, k-1]{A} - \mat[n]{G}}_\mathrm{max} &\leq \sqrt{2} \epsilon_{k-1}, \\
    \norm{\mat[n, k]{G} - \mat[n]{G}}_\mathrm{max} &\leq 2\sqrt{2} \epsilon_{k}, &
    \norm{\mat[n, k-1]{G} - \mat[n]{G}}_\mathrm{max} &\leq 2\sqrt{2} \epsilon_{k-1},
\end{align*}
and can thus argue similarly.

\subsection{Global convergence of CP-AltLS} \label{SS:convergence-global}

Although the convergence of CP-AltLS depends on the initial guesses for the factor matrices, we can
mitigate this in principle by restarting the algorithm with random initial guesses if it
fails to converge, thereby ensuring that the algorithm converges with high probability if
sufficiently many initializations are performed. We exemplify this principle below for
small tensors, for which the exact probabilities involved are straightforward to calculate.

\begin{proposition}[Global convergence of CP-AltLS for small cubical orthogonally decomposable tensors]
    Let $\ten{X} = \tuck[\vec{\lambda}]{\mat[1]{A}, \dots, \mat[N]{A}}$
    \textup($N \geq 3$\textup) for some $\vec{\lambda} \in \R^R$ and some $\mat[n]{A}
    \in \R^{2 \times R}$ with orthonormal columns.
    Suppose that \Cref{CP-AltLS} is applied to $\ten{X}$ with $\vec[n,
    0][1]{a}$ drawn randomly from the uniform distribution on $S^1$ \textup(for each $n \in
    [N]$\textup) and, if $R = 2$, $\vec[n, 0][2]{a}$ chosen orthonormal to $\vec[n,
    0][1]{a}$.
    For any $\delta \in (0, 1)$, if more than 
    \[
        \frac{\log(\delta)}{\log\left(1 - (\frac{2C}{\pi})^N \kappa^{-3} R^{-\frac{3}{2}}\right)}
    \]
    initializations are performed,
    where $C$ and $\kappa$ are as in \Cref{T:odeco},
    then with probability at least $1-\delta$, 
    the algorithm will converge for one of these initializations.
\end{proposition}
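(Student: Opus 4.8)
The plan is to reduce the global statement to the local convergence theorem (\Cref{T:odeco}) via an independence argument over restarts, with the main work being the computation of a lower bound on the success probability of a single random initialization. First I would fix the exact decomposition $\ten{X} = \tuck[\vec{\lambda}]{\mat[1]{A}, \dots, \mat[N]{A}}$ with the $\mat[n]{A} \in \R^{2 \times R}$ having orthonormal columns; note $R \in \set{1, 2}$ since $\rank(\ten{X}) \leq \min_n I_n = 2$. The key observation is that a single initialization ``succeeds'' (i.e., the hypothesis of \Cref{T:odeco} is met, so the algorithm converges) provided
\[
    \epsilon_0 = \max_{n \in [N],\, r \in [R]} \abs{\sin \angle(\vec[n, 0][r]{a}, \vec[n][r]{a})}
    < C \kappa^{-\frac{1}{N-2}} R^{-\frac{1}{N-1}} \eqdef \tau.
\]
So I need to lower-bound the probability of the event $\set{\epsilon_0 < \tau}$ under the stated sampling scheme, and then the restart argument is immediate: the initializations are independent, so if $m$ initializations are performed, the probability that \emph{none} succeeds is at most $(1 - p)^m$ where $p$ is the single-trial success probability; requiring $(1-p)^m \leq \delta$ gives $m > \log(\delta)/\log(1-p)$, matching the stated bound once we show $p \geq (\tfrac{2C}{\pi})^N \kappa^{-3} R^{-3/2}$.

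Next I would compute $p$. Since the $\vec[n, 0][1]{a}$ are drawn independently and uniformly from $S^1$ across the $N$ modes, and (when $R = 2$) $\vec[n, 0][2]{a}$ is determined up to sign as a unit vector orthogonal to $\vec[n, 0][1]{a}$, the event $\set{\epsilon_0 < \tau}$ factors as a product over $n$ of per-mode events. For a fixed mode $n$: when $R = 1$, I need $\abs{\sin\angle(\vec[n,0][1]{a}, \vec[n][1]{a})} < \tau$, which for a uniform point on $S^1$ has probability $\tfrac{2}{\pi}\arcsin(\tau) \geq \tfrac{2}{\pi}\tau$. When $R = 2$, since $\set{\vec[n][1]{a}, \vec[n][2]{a}}$ is an orthonormal pair in $\R^2$ and $\set{\vec[n,0][1]{a}, \vec[n,0][2]{a}}$ is also an orthonormal pair, the angle $\angle(\vec[n,0][1]{a}, \vec[n][1]{a})$ being small forces $\angle(\vec[n,0][2]{a}, \vec[n][2]{a})$ to be small as well (after fixing the sign of $\vec[n,0][2]{a}$ appropriately, which is legitimate by the sign-absorption remark following \Cref{T:odeco}); indeed both sines equal $\abs{\sin\angle(\vec[n,0][1]{a},\vec[n][1]{a})}$. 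So again the per-mode event has probability at least $\tfrac{2}{\pi}\tau$. Multiplying over the $N$ modes gives $p \geq (\tfrac{2}{\pi}\tau)^N = (\tfrac{2C}{\pi})^N \kappa^{-\frac{N}{N-2}} R^{-\frac{N}{N-1}}$.

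Finally I would simplify the exponents to obtain the cleaner stated form. For $N \geq 3$ we have $\tfrac{N}{N-2} \leq 3$ and $\tfrac{N}{N-1} \leq \tfrac{3}{2}$; since $\kappa \geq 1$ and $R \geq 1$, it follows that $\kappa^{-\frac{N}{N-2}} \geq \kappa^{-3}$ and $R^{-\frac{N}{N-1}} \geq R^{-\frac{3}{2}}$, hence $p \geq (\tfrac{2C}{\pi})^N \kappa^{-3} R^{-\frac{3}{2}}$. Plugging this lower bound for $p$ into $m > \log(\delta)/\log(1-p)$ (valid since $\log(1-p) < 0$ and the bound on $m$ only grows as $p$ decreases) yields exactly the claimed threshold. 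The only real subtlety — and the step I expect to require the most care — is the $R = 2$ case: verifying that an orthonormal \emph{frame} in $\R^2$ sampled by rotating a uniform point on $S^1$ stays close (in the $\max_r \abs{\sin\angle}$ sense, after the allowed sign choices) to the target orthonormal frame precisely when the first vectors are close. In dimension $2$ this is elementary trigonometry — a rotation by angle $\theta$ moves \emph{every} direction by the same angle $\theta$ — but it is the place where one must be attentive to the sign conventions inherited from \Cref{T:odeco}.
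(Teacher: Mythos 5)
Your proposal is correct and follows essentially the same route as the paper: reduce to the local theorem, bound the per-mode success probability by $\tfrac{2}{\pi}\epsilon$ using uniformity on $S^1$, raise to the $N$\textsuperscript{th} power by independence, loosen the exponents $\tfrac{N}{N-2}\le 3$ and $\tfrac{N}{N-1}\le \tfrac{3}{2}$, and finish with a geometric-series restart argument. The paper's proof is terser — it conditions directly on the angle between spans being $<\epsilon$ (probability $\tfrac{4\epsilon}{2\pi}$) rather than writing $\tfrac{2}{\pi}\arcsin\epsilon$, and does not spell out the $R=2$ rigidity argument that you justify explicitly — but the two arguments are identical in substance.
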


\begin{proof} 
    Let $\epsilon_0$ be as in \Cref{T:odeco} and $\epsilon \defeq C \kappa^{-\frac{1}{N-2}}
    R^{-\frac{1}{N-1}}$ so that the algorithm converges if $\epsilon_0 < \epsilon$. If the
    angle between the span of $\vec[n, 0][1]{a}$ and that of $\vec[n][1]{a}$ is less
    than $\epsilon$, then $\max_{r \in [R]} \abs{\sin \angle(\vec[n, 0][1]{a},
    \vec[n][1]{a})} \allowbreak < \epsilon$, and the probability of this occurring is
    $\frac{4\epsilon}{2\pi}$. Hence the probability of a convergent initialization is at
    least $(\frac{2\epsilon}{\pi})^N \geq (\frac{2C}{\pi})^N \kappa^{-3} R^{-\frac{3}{2}}$,
    from which the result follows.
\end{proof}

\section{Experiments} \label{S:experiments}

\begin{note}
    In the following experiments, a \emph{random vector\textup/matrix} will refer to a
    vector/matrix with entries drawn independently from the standard normal distribution,
    unless otherwise specified.
\end{note}

\subsection{Orthogonally decomposable tensors} \label{SS:exp-odeco}

To verify \Cref{T:odeco} empirically, random weights $\vec{\lambda} \in \R^{10}$ and factor matrices
$\mat[n]{A} \in \R^{20 \times 10}$ were generated, with the factor matrices taken from the QR
factorizations of random $20 \times 20$ matrices. 
Each factor matrix was perturbed by a (distinct) random matrix scaled by $10^{-2}$ and was then
normalized columnwise to form the initial approximate factor matrices $\mat[n, 0]{A} \in \R^{20 \times 10}$.
\Cref{F:odeco} shows the result of applying \Cref{CP-AltLS} when $N = 3$ and when $N = 4$,
in which case quadratic convergence and cubic convergence, respectively, are apparent.

\begin{figure}[htbp]
    \centering

    \begin{subfigure}{0.5\textwidth}
        \centering
        \includegraphics[width=\textwidth]{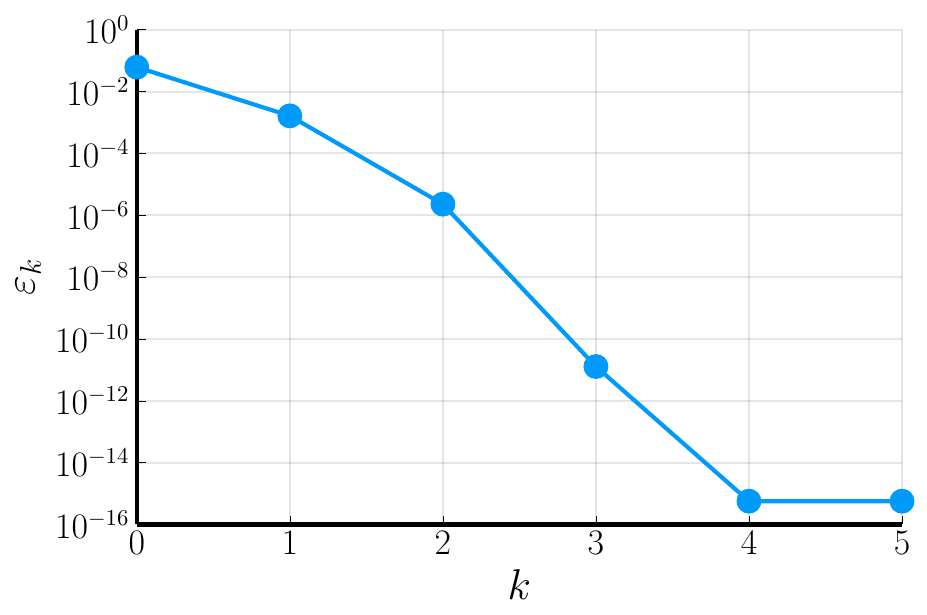}
        \caption{$N = 3$}
        \label{F:odeco-3}
    \end{subfigure}%
    \begin{subfigure}{0.5\textwidth}
        \centering
        \includegraphics[width=\textwidth]{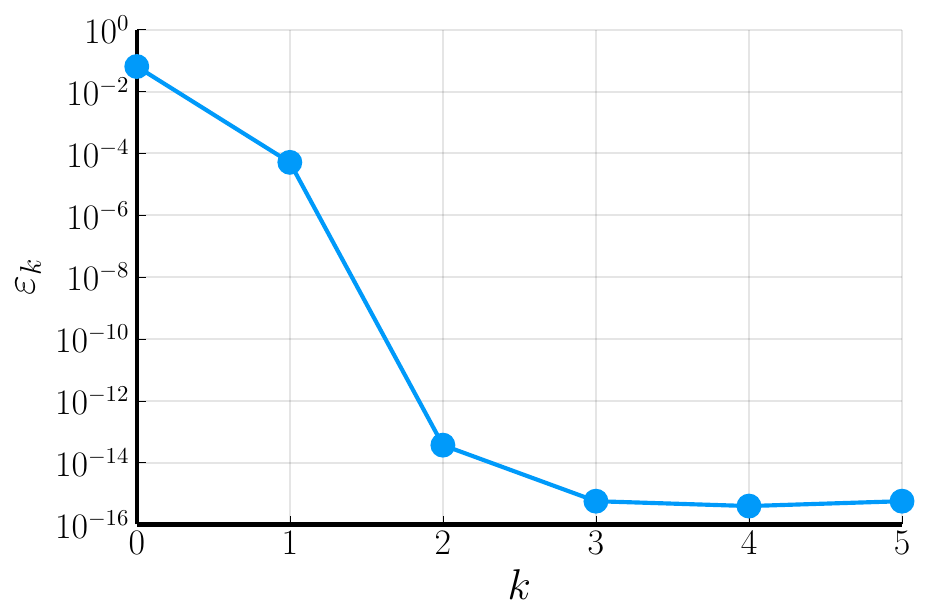}
        \caption{$N = 4$}
        % \label{}
    \end{subfigure}

    \caption{Polynomial convergence of \Cref{CP-AltLS} for $N$\textsuperscript{th}-order
    orthogonally decomposable tensors.}
    \label{F:odeco}
\end{figure}

\subsection{Incoherently decomposable tensors} \label{SS:exp-ideco}

To verify \Cref{T:ideco} empirically, random weights and factor matrices were generated as in
\Cref{SS:exp-odeco}, except that each exact factor matrix $\mat[n]{A}$ was also perturbed by a
(distinct) random matrix scaled by $10^{-2}$ to introduce incoherence.
\Cref{F:ideco} shows the result of applying \Cref{CP-AltLS} when $N = 3$ and when $N = 4$;
in both cases, linear convergence is apparent.

\begin{figure}[htbp]
    \centering

    \begin{subfigure}{0.5\textwidth}
        \centering
        \includegraphics[width=\textwidth]{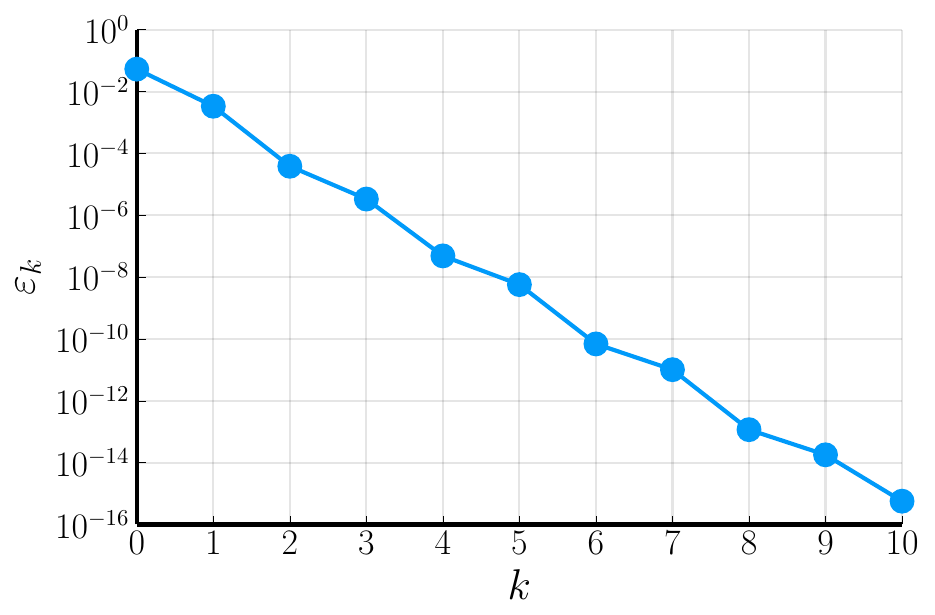}
        \caption{$N = 3$}
        \label{F:ideco-3}
    \end{subfigure}%
    \begin{subfigure}{0.5\textwidth}
        \centering
        \includegraphics[width=\textwidth]{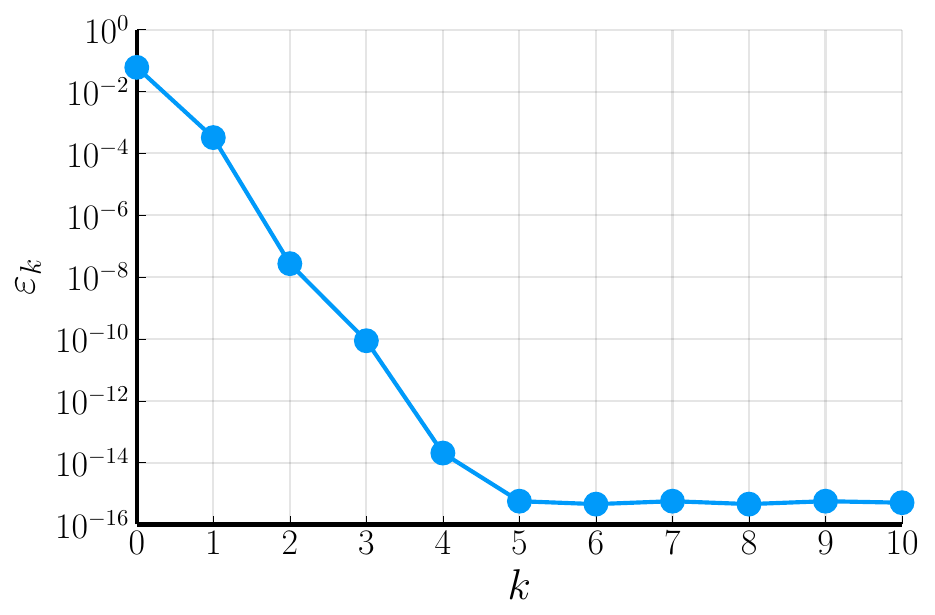}
        \caption{$N = 4$}
        % \label{}
    \end{subfigure}

    \caption{Linear convergence of \Cref{CP-AltLS} for $N$\textsuperscript{th}-order
    incoherently decomposable tensors.}
    \label{F:ideco}
\end{figure}

\subsection{Weights} \label{SS:exp-weights}

In \Cref{F:weights}, we plot the error in the \emph{squared} weights, computed as
$\norm{\vec{\lambda} \hd \vec{\lambda} - \vec[k]{\lambda} \hd \vec[k]{\lambda}}_\infty$, for
the tensors of \Cref{F:odeco-3,F:ideco-3}. 
For both tensors, the order of convergence of the weights matches that of the quantities
$\epsilon_k$, confirming \Cref{C:weights}.

\begin{figure}[htbp]
    \centering

    \begin{subfigure}{0.5\textwidth}
        \centering
        \includegraphics[width=\textwidth]{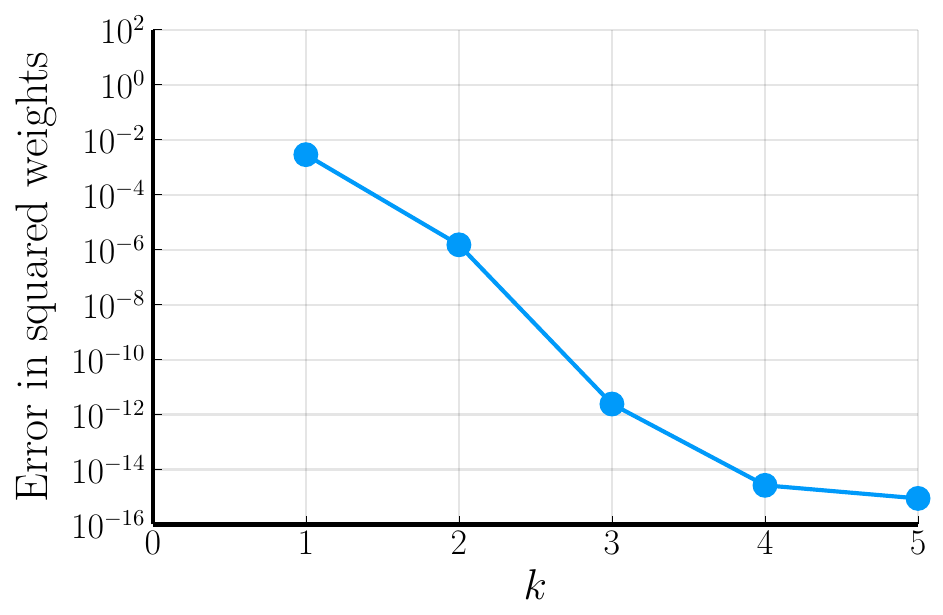}
        \caption{Orthogonally decomposable tensor}
        % \label{}
    \end{subfigure}%
    \begin{subfigure}{0.5\textwidth}
        \centering
        \includegraphics[width=\textwidth]{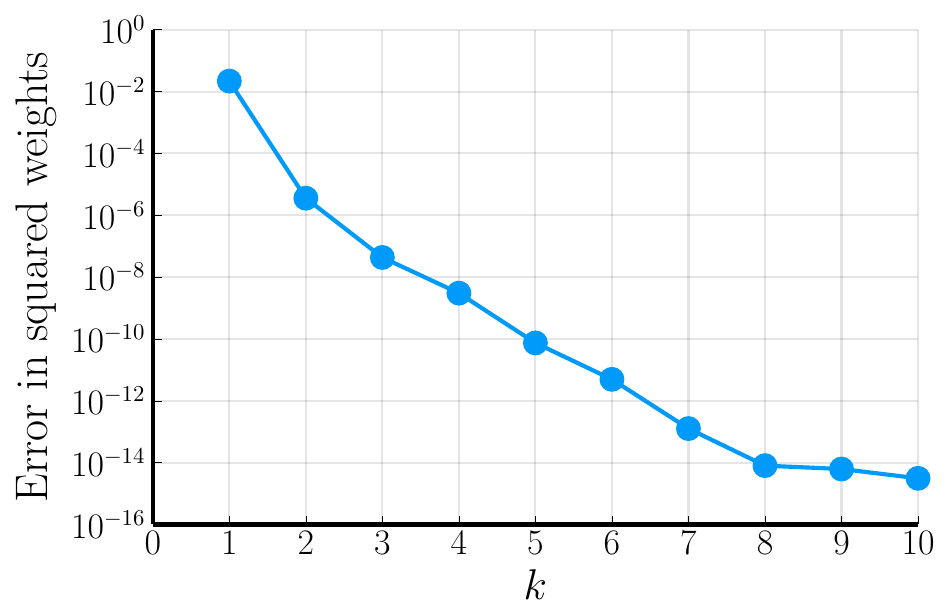}
        \caption{Incoherently decomposable tensor}
        % \label{}
    \end{subfigure}

    \caption{Convergence of weights in \Cref{CP-AltLS} for 3\textsuperscript{rd}-order
    orthogonally and incoherently decomposable tensors.}
    \label{F:weights}
\end{figure}

\subsection{Convergence acceleration of CP-SAltLS using SVD coherence reduction} \label{SS:exp-orth}

Given the rapid convergence of CP-SAltLS (and CP-PAltLS) for orthogonally decomposable
tensors, it is natural to attempt to accelerate the convergence of this algorithm for
general tensors by orthogonalizing the computed factor matrices.
The \emph{Orth-ALS} algorithm of Sharan and Valiant \cite{SharanValiant} does so by replacing
$\mat[n]{A}$ with $\mat[n]{Q}$, where $\mat[n]{Q} \mat[n]{R}$ is a QR factorization of the
(normalized) approximate factor matrix $\mat[n]{A}$.

However, this technique does not produce factor matrices suitable for the non-orthogonal but incoherent setting discussed in this paper. 
To address this, we propose incorporating a \emph{coherence reduction} procedure by replacing $\mat[n]{A}$
with $\mat[n]{U} (\mat[n]{\Sigma})^\omega (\mat[n]{V})^\tp$ for some parameter $\omega \in
[0, 1]$, where $\mat[n]{U} \mat[n]{\Sigma} (\mat[n]{V})^\tp$ represents an SVD of
$\mat[n]{A}$ prior to normalization. Note that $\omega = 1$ leaves each factor matrix $\mat[n]{A}$ unchanged, $\omega = 0$ is analogous to the orthogonal decomposition in the \emph{Orth-ALS} algorithm by Sharan and Valiant \cite{SharanValiant}, and $\omega \in (0,1)$ reduces the coherence without achieving orthogonality. In our experiment, we also observed that slightly more
accurate results could be obtained by performing the column-normalization step for all factor matrices at the end of each iteration
instead of for each individual factor matrix immediately. 
We found that performing regular AltLS iterations after these coherence-reduced AltLS
iterations -- by analogy with Sharan and Valiant's \emph{Hybrid-ALS} algorithm -- was often more
accurate than performing only regular AltLS iterations.

Specifically, \Cref{F:orth-random} displays the relative error $\frac{\norm{\ten{X} -
\ten[k]{X}}}{\norm{\ten{X}}}$ for
\[
    \ten{X} = 
    \vec{a}_1 \otr \vec{a}_2 \otr \vec{a}_3 + 
    \vec{a}_2 \otr \vec{a}_3 \otr \vec{a}_1 + 
    \vec{a}_3 \otr \vec{a}_1 \otr \vec{a}_2, 
\]
where $\vec{a}_1, \vec{a}_2, \vec{a}_3 \in \R^{10}$ have entries uniformly distributed in
$[0, 1)$ and the initial approximate factor matrices $\mat[n, 0]{A} \in \R^{10 \times 3}$ are
(normalized) random matrices. In this experiment, we performed 25 coherence-reduced iterations
followed by 25 regular iterations.

\Cref{F:orth-aminoacids} displays the same for Andersson and Bro's \emph{amino acids tensor}
$\ten{X} \in \R^{5 \times 51 \times 201}$ (consisting of fluorescence measurements of 3
amino acids from 5 samples, 51 excitations, and 201 emissions) \cite{Bro}, again with random
$\mat[n, 0]{A} \in \R^{I_n \times 3}$.  In this experiment, we performed 10 coherence-reduced
iterations followed by 20 regular iterations.

\begin{figure}[htbp]
    \centering

    \begin{subfigure}{0.5\textwidth}
        \centering
        \includegraphics[width=\textwidth]{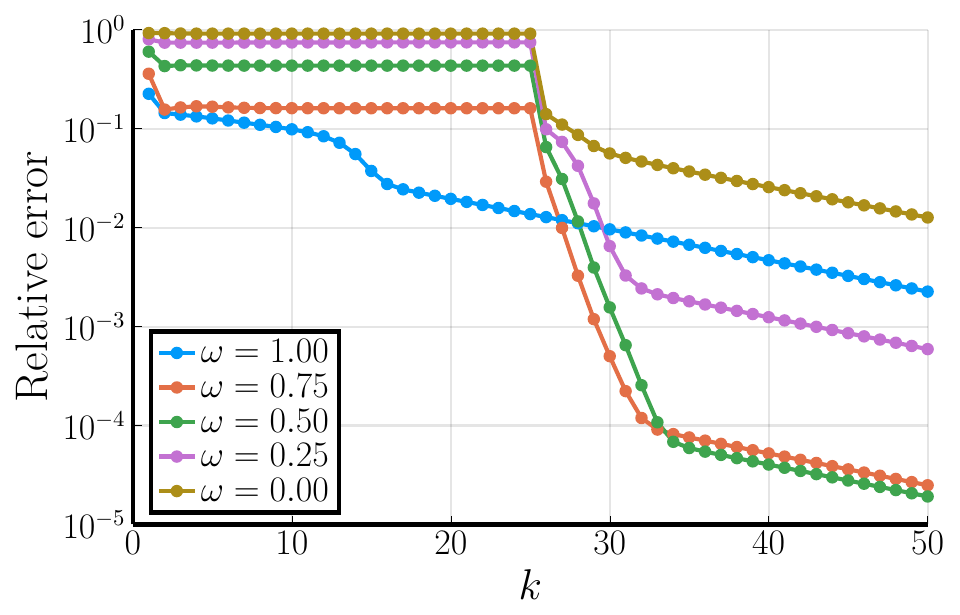}
        \caption{Random tensor}
        \label{F:orth-random}
    \end{subfigure}%
    \begin{subfigure}{0.5\textwidth}
        \centering
        \includegraphics[width=\textwidth]{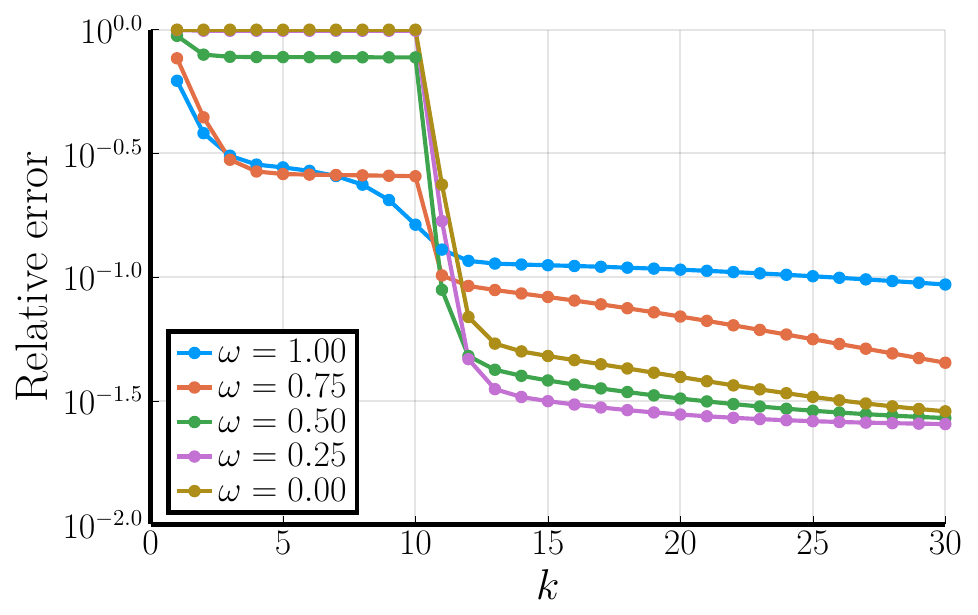}
        \caption{Amino acids tensor}
        \label{F:orth-aminoacids}
    \end{subfigure}

    \caption{Convergence of \Cref{CP-SAltLS} for coherence reduction parameter values ranging
    from $\omega = 1$ (no coherence reduction) to $\omega = 0$ (complete coherence reduction).}
    \label{F:orth}
\end{figure}

\section{Conclusion}

In this paper, we proved the first explicit quantitative local convergence theorems for the widely used CP-AltLS method, addressing a gap in its theoretical backing. We demonstrated that CP-AltLS converges polynomially with order $N-1$ for $N$\textsuperscript{th}-order orthogonally decomposable tensors and linearly for incoherently (nearly orthogonally) decomposable tensors. This convergence is dependent on the angles between the factors of the exact and approximate tensors. We believe our approach is more direct and less technical than the approaches in prior work and remains applicable to tensors of arbitrary rank, even allowing for factor matrices with small but nonzero mutual coherence.

We supported our theoretical analysis with numerical experiments and observed that the squared weights in the CP decomposition converge at the same rate as the factor matrices. We also explored accelerating the convergence of CP-AltLS by using an SVD-based coherence reduction scheme, and found that using coherence-reduced and then regular iterations often led to more accurate results than using only regular iterations. We believe our findings contribute theoretical insights into the behaviour of CP-AltLS and  suggest practical strategies for improving its performance as well as future theoretical directions.

\section{Acknowledgments}
M.~Iwen was partially supported by NSF DMS 2106472.
D.~Needell was partially supported by NSF DMS 2408912.
R.~Wang was supported by NSF CCF 2212065.  

% \clearpage
% \emergencystretch=1em
% \printbibliography
\bibliographystyle{siamplain}
\bibliography{main}

% \section*{Appendix}
% \renewcommand{\thetheorem}{\arabic{theorem}}

\appendix
\section{Technical lemmas} \label{appendix}

\begin{lemma} \label{L:innerprod-i}
    Suppose that $\mat{A}, \mat{B} \in \R^{m \times n}$ have normalized columns.  If
    $\inner{\vec{a}_j}{\vec{b}_j} \geq 0$ for all $j$ and $\epsilon \defeq \max_{j \in [n]}
    \, \abs{\sin \angle(\vec{a}_j, \vec{b}_j)}$, then
    \begin{enumerate}[label=\textup(\alph*\textup)]
        \item
            $\norm{\mat{B}^\tp \mat{A} - \mat{B}^\tp \mat{B}}_\mathrm{max} \leq \sqrt{2}
            \epsilon;$
        \item
            $\norm{\mat{A}^\tp \mat{A} - \mat{B}^\tp \mat{B}}_\mathrm{max} \leq 2\sqrt{2}
            \epsilon.$
    \end{enumerate}
\end{lemma}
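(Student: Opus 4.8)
The plan is to argue entry by entry, reducing everything to the single estimate $\norm{\vec{a}_j - \vec{b}_j} \leq \sqrt{2}\,\epsilon$ for each column index $j$. For part (a), fix $i, j$ and note that the $(i,j)$-entry of $\mat{B}^\tp \mat{A} - \mat{B}^\tp \mat{B}$ equals $\inner{\vec{b}_i}{\vec{a}_j} - \inner{\vec{b}_i}{\vec{b}_j} = \inner{\vec{b}_i}{\vec{a}_j - \vec{b}_j}$, so by the Cauchy--Schwarz inequality and $\norm{\vec{b}_i} = 1$ it is bounded in absolute value by $\norm{\vec{a}_j - \vec{b}_j}$. It therefore remains to establish the elementary bound $\norm{\vec{a}_j - \vec{b}_j} \leq \sqrt{2}\,\abs{\sin\angle(\vec{a}_j, \vec{b}_j)}$ for unit vectors meeting at an acute angle: writing $\theta \defeq \angle(\vec{a}_j, \vec{b}_j)$ and expanding, $\norm{\vec{a}_j - \vec{b}_j}^2 = 2 - 2\cos\theta = 2(1 - \cos\theta)$, and since the hypothesis $\inner{\vec{a}_j}{\vec{b}_j} \geq 0$ forces $\cos\theta \in [0, 1]$, we may multiply by $1 + \cos\theta \geq 1$ to get $2(1 - \cos\theta) \leq 2(1-\cos\theta)(1+\cos\theta) = 2\sin^2\theta \leq 2\epsilon^2$. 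Taking square roots gives $\norm{\vec{a}_j - \vec{b}_j} \leq \sqrt{2}\,\epsilon$ and hence part (a).

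For part (b), I would add and subtract $\inner{\vec{b}_i}{\vec{a}_j}$ to split the $(i,j)$-entry of $\mat{A}^\tp \mat{A} - \mat{B}^\tp \mat{B}$ as $\inner{\vec{a}_i}{\vec{a}_j} - \inner{\vec{b}_i}{\vec{b}_j} = \inner{\vec{a}_i - \vec{b}_i}{\vec{a}_j} + \inner{\vec{b}_i}{\vec{a}_j - \vec{b}_j}$. Both summands have the form treated in part (a): by Cauchy--Schwarz and the unit-norm property they are bounded in absolute value by $\norm{\vec{a}_i - \vec{b}_i} \leq \sqrt{2}\,\epsilon$ and $\norm{\vec{a}_j - \vec{b}_j} \leq \sqrt{2}\,\epsilon$ respectively, so the triangle inequality yields the bound $2\sqrt{2}\,\epsilon$. (Equivalently, one can decompose $\mat{A}^\tp \mat{A} - \mat{B}^\tp \mat{B} = (\mat{A} - \mat{B})^\tp \mat{A} + \mat{B}^\tp (\mat{A} - \mat{B})$ and observe that $\norm{(\mat{A} - \mat{B})^\tp \mat{A}}_\mathrm{max}$ and $\norm{\mat{B}^\tp (\mat{A} - \mat{B})}_\mathrm{max}$ are each controlled by the same per-column estimate, recovering part (a) as a special case.)

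There is no real obstacle here; the lemma is an elementary consequence of the Cauchy--Schwarz inequality combined with the half-angle-type bound $\norm{\vec{a} - \vec{b}} \leq \sqrt{2}\,\abs{\sin\angle(\vec{a}, \vec{b})}$ for unit vectors at an acute angle. The only step requiring any care — and the only place the sign hypothesis $\inner{\vec{a}_j}{\vec{b}_j} \geq 0$ enters — is guaranteeing $\cos\theta \geq 0$, which is exactly what licenses multiplying through by $1 + \cos\theta$; without acuteness the constant $\sqrt{2}$ would have to be replaced by $2$ (as $\norm{\vec{a} - \vec{b}} \leq 2$ always), and the sharp bounds $\sqrt{2}\epsilon$ and $2\sqrt{2}\epsilon$ would be lost.
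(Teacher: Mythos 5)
Your proof is correct and follows essentially the same route as the paper: bound each entry of the difference matrix by Cauchy--Schwarz against $\norm{\vec{a}_j - \vec{b}_j}$, show $\norm{\vec{a}_j - \vec{b}_j}^2 = 2(1-\cos\theta) \leq 2\epsilon^2$ using the acuteness hypothesis, and deduce (b) from (a) by the triangle inequality. The only cosmetic difference is how the bound $1-\cos\theta \leq \epsilon^2$ is derived: the paper uses $\cos\theta \geq \sqrt{1-\epsilon^2} \geq 1-\epsilon^2$, whereas you multiply through by $1+\cos\theta \geq 1$; both exploit $\cos\theta \geq 0$ in the same way and give the same constant.
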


\begin{proof}
    For part (a), observe that $\abs{\inner{\vec{a}_j}{\vec{b}_i} -
    \inner{\vec{b}_j}{\vec{b}_i}}^2 \leq \norm{\vec{a}_j - \vec{b}_j}^2 = 2(1 -
    \inner{\vec{a}_j}{\vec{b}_j})$, where $\inner{\vec{a}_j}{\vec{b}_j} = \cos
    \angle(\vec{a}_j, \vec{b}_j) \geq \sqrt{1 - \epsilon^2} \geq 1 - \epsilon^2$.
    Part (b) follows immediately from part (a) and the triangle inequality.
\end{proof}

\begin{lemma} \label{L:innerprod-o}
    Suppose that $\mat{A} \in \R^{m \times n}$ has normalized columns and 
    $\mat{B} \in \R^{m \times n}$ has orthonormal columns.
    If $\inner{\vec{a}_j}{\vec{b}_j} \geq 0$ for all $j$ and 
    $\epsilon \defeq \max_{j \in [n]} \, \abs{\sin \angle(\vec{a}_j, \vec{b}_j)}$, then
    $\norm{\D'(\mat{B}^\tp \mat{A})}_{1, 2} \leq \epsilon$.
    %, with equality if $\mat{B}^\tp \mat{A}$ has normalized columns.
\end{lemma}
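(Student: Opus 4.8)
The plan is to reduce the estimate to a column-by-column bound, using the elementary fact that the $(1,2)$-norm of a matrix is exactly the largest Euclidean norm among its columns. Concretely, for any $\mat{M} \in \R^{n \times n}$ one has $\norm{\mat{M}}_{1,2} = \max_{j \in [n]} \norm{\mat{M} \vec{e}_j}_2$, since the unit $\ell^1$-ball is the convex hull of $\set{\pm \vec{e}_j : j \in [n]}$ and $\vec{x} \mapsto \norm{\mat{M}\vec{x}}_2$ is convex, so its supremum over that ball is attained at an extreme point. Applying this to $\mat{M} = \D'(\mat{B}^\tp \mat{A})$, it suffices to bound $\norm{\D'(\mat{B}^\tp \mat{A})\vec{e}_j}_2$ for each fixed $j$.

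Next I would look at the $j$\textsuperscript{th} column itself. The $j$\textsuperscript{th} column of $\mat{B}^\tp \mat{A}$ is $\mat{B}^\tp \vec{a}_j$, whose $i$\textsuperscript{th} entry is $\inner{\vec{b}_i}{\vec{a}_j}$. Since $\mat{B}$ has orthonormal columns, Bessel's inequality gives $\norm{\mat{B}^\tp \vec{a}_j}_2^2 = \sum_{i=1}^{n} \inner{\vec{b}_i}{\vec{a}_j}^2 \leq \norm{\vec{a}_j}_2^2 = 1$. Passing to the off-diagonal part deletes the $j$\textsuperscript{th} coordinate of this column, namely $\inner{\vec{b}_j}{\vec{a}_j} = \cos \angle(\vec{a}_j, \vec{b}_j)$; by the sign hypothesis and the definition of $\epsilon$, this quantity is at least $\sqrt{1-\epsilon^2}$, so its square is at least $1 - \epsilon^2$. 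Hence $\norm{\D'(\mat{B}^\tp \mat{A})\vec{e}_j}_2^2 \leq 1 - (1 - \epsilon^2) = \epsilon^2$, and taking the maximum over $j \in [n]$ yields $\norm{\D'(\mat{B}^\tp \mat{A})}_{1,2} \leq \epsilon$.

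There is essentially no obstacle here: the argument is two short inequalities once the column-norm characterization of $\norm{\cdot}_{1,2}$ and Bessel's inequality are in hand. The only point requiring a modicum of care is that the diagonal entry being removed is itself bounded above by $1$ (so that the subtraction in $\norm{\mat{B}^\tp \vec{a}_j}_2^2 - \inner{\vec{b}_j}{\vec{a}_j}^2$ is licit), which is exactly what orthonormality of $\mat{B}$ provides. I note in passing that the sign hypothesis $\inner{\vec{a}_j}{\vec{b}_j} \geq 0$ is not strictly needed for this lemma, since $\inner{\vec{b}_j}{\vec{a}_j}^2 = \cos^2 \angle(\vec{a}_j, \vec{b}_j) \geq 1 - \epsilon^2$ in any case; it is retained only to keep the hypotheses uniform with the companion estimates in \Cref{L:innerprod-i}.
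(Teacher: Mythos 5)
Your proof is correct and follows essentially the same route as the paper's: Bessel's inequality applied to the columns of $\mat{B}^\tp\vec{a}_j$, followed by removing the $j$\textsuperscript{th} (diagonal) entry, whose square is $\cos^2\angle(\vec{a}_j,\vec{b}_j)$. The only difference is cosmetic — you make explicit the fact that $\norm{\cdot}_{1,2}$ is the largest Euclidean column norm, which the paper's one-line proof leaves implicit, and you correctly observe that the sign hypothesis on $\inner{\vec{a}_j}{\vec{b}_j}$ is not actually used in this lemma.
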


\begin{proof}
    By Bessel's inequality,
    $\sum_{i \in [n]} \inner{\vec{a}_j}{\vec{b}_i}^2 \leq \norm{\vec{a}_j}^2 = 1$, so
    $(\sum_{i \neq j} \inner{\vec{a}_j}{\vec{b}_i}^2)^{\frac{1}{2}} \leq (1 -
    \inner{\vec{a}_j}{\vec{b}_j}^2)^{\frac{1}{2}} = \abs{\sin \angle(\vec{a}_j, \vec{b}_j)}$.
    % with equality if $\mat{B}^\tp \mat{A}$ has normalized columns.
\end{proof}

\begin{lemma} \label{L:inverse}
    Let $\mat{A}, \mat{B} \in \R^{n \times n}$ with $\D(\mat{A}) = \D(\mat{B}) = \mat{I}$.
    If $\norm{\D'(\mat{A} - \mat{B})}_\mathrm{max} \leq \epsilon$, $\norm{\D'(\mat{B})}_\mathrm{max} \leq
    \epsilon'$, and $(n-1)(\epsilon + \epsilon') < 1$, then $\mat{A}$ and $\mat{B}$ are invertible and 
    \begin{enumerate}[label=\textup(\alph*\textup)]
        \item
            \begin{align*}
                \norm{\mat{A}^{-1} - \mat{B}^{-1}}_\mathrm{max} 
                &\leq 
                \frac{\epsilon}{[1 - (n-1)(\epsilon + \epsilon')][1 - (n-1)\epsilon']} \\
                &\leq
                \frac{\epsilon}{[1 - (n-1)(\epsilon + \epsilon')]^2};
            \end{align*}
        \item
            \[
                \norm{\mat{B}^{-1}}_\mathrm{max} \leq 
                \frac{1}{1 - (n-1)\epsilon'};
            \]
        \item
            \[
                \norm{\D'(\mat{B}^{-1})}_\mathrm{max} \leq 
                \frac{(n-1)\epsilon'}{1 - (n-1)\epsilon'}.
            \]
    \end{enumerate}
\end{lemma}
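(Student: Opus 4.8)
The plan is to write $\mat{A} = \mat{I} + \mat{M}$ and $\mat{B} = \mat{I} + \mat{N}$, where $\mat{M} \defeq \D'(\mat{A})$ and $\mat{N} \defeq \D'(\mat{B})$ (using $\D(\mat{A}) = \D(\mat{B}) = \mat{I}$). Since $\D'(\mat{A}) = \D'(\mat{A} - \mat{B}) + \D'(\mat{B})$, the triangle inequality gives $\norm{\mat{M}}_\mathrm{max} \leq \epsilon + \epsilon'$, while of course $\norm{\mat{N}}_\mathrm{max} \leq \epsilon'$. Everything is then controlled through two elementary facts: (i) $\norm{\mat{X}\mat{Y}}_\mathrm{max} \leq \norm{\mat{X}}_\infty \norm{\mat{Y}}_\mathrm{max}$ and $\norm{\mat{X}\mat{Y}}_\mathrm{max} \leq \norm{\mat{X}}_\mathrm{max} \norm{\mat{Y}}_1$, both immediate from $(\mat{X}\mat{Y})_{ij} = \sum_\ell X_{i\ell} Y_{\ell j}$; and (ii) a matrix $\mat{Z} \in \R^{n \times n}$ with zero diagonal has at most $n-1$ nonzero entries in each row and each column, so $\norm{\mat{Z}}_\infty, \norm{\mat{Z}}_1 \leq (n-1) \norm{\mat{Z}}_\mathrm{max}$. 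In particular $\norm{\mat{M}}_\infty \leq (n-1)(\epsilon + \epsilon') < 1$ and $\norm{\mat{N}}_\infty, \norm{\mat{N}}_1 \leq (n-1)\epsilon' \leq (n-1)(\epsilon+\epsilon') < 1$, so the operator-norm Neumann series shows $\mat{A}$ and $\mat{B}$ are invertible with $\norm{\mat{A}^{-1}}_\infty \leq [1 - (n-1)(\epsilon+\epsilon')]^{-1}$ and $\norm{\mat{B}^{-1}}_\infty, \norm{\mat{B}^{-1}}_1 \leq [1-(n-1)\epsilon']^{-1}$.

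Part (b) is then immediate: the max-norm of any matrix is bounded by its $\infty$-norm, so $\norm{\mat{B}^{-1}}_\mathrm{max} \leq \norm{\mat{B}^{-1}}_\infty \leq [1 - (n-1)\epsilon']^{-1}$. For part (c), I rearrange $(\mat{I} + \mat{N})\mat{B}^{-1} = \mat{I}$ into $\mat{B}^{-1} = \mat{I} - \mat{N}\mat{B}^{-1}$; applying $\D'$ annihilates the identity term, so $\norm{\D'(\mat{B}^{-1})}_\mathrm{max} = \norm{\D'(\mat{N}\mat{B}^{-1})}_\mathrm{max} \leq \norm{\mat{N}\mat{B}^{-1}}_\mathrm{max} \leq \norm{\mat{N}}_\infty \norm{\mat{B}^{-1}}_\mathrm{max} \leq (n-1)\epsilon' \cdot [1-(n-1)\epsilon']^{-1}$, combining the first inequality of (i), fact (ii), and part (b).

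For part (a), I start from the resolvent identity $\mat{A}^{-1} - \mat{B}^{-1} = -\mat{A}^{-1}(\mat{A} - \mat{B})\mat{B}^{-1}$ and note that $\mat{A} - \mat{B}$ has zero diagonal (both matrices have diagonal $\mat{I}$), so $\norm{\mat{A} - \mat{B}}_\mathrm{max} = \norm{\D'(\mat{A} - \mat{B})}_\mathrm{max} \leq \epsilon$. Chaining the two inequalities in (i), $\norm{\mat{A}^{-1}(\mat{A}-\mat{B})\mat{B}^{-1}}_\mathrm{max} \leq \norm{\mat{A}^{-1}}_\infty \, \norm{(\mat{A}-\mat{B})\mat{B}^{-1}}_\mathrm{max} \leq \norm{\mat{A}^{-1}}_\infty \, \norm{\mat{A}-\mat{B}}_\mathrm{max} \, \norm{\mat{B}^{-1}}_1$, and substituting the three bounds from the first paragraph yields $\norm{\mat{A}^{-1} - \mat{B}^{-1}}_\mathrm{max} \leq \epsilon \, [1-(n-1)(\epsilon+\epsilon')]^{-1}[1-(n-1)\epsilon']^{-1}$. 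The weaker bound stated in (a) follows at once because $1 - (n-1)(\epsilon+\epsilon') \leq 1 - (n-1)\epsilon'$.

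The one point requiring care — and the reason a naive estimate such as $\norm{\mat{X}\mat{Y}}_\mathrm{max} \leq n\norm{\mat{X}}_\mathrm{max}\norm{\mat{Y}}_\mathrm{max}$ would produce dimension factors in the wrong places — is the bookkeeping that attaches exactly one factor of $n-1$ to each genuinely off-diagonal object ($\mat{M}$, $\mat{N}$, and $\mat{A}-\mat{B}$) and none to the dense inverses. Routing each product through $\norm{\cdot}_\infty$ on the left and $\norm{\cdot}_1$ on the right, and using zero-diagonality via fact (ii) to trade a max-norm for an operator norm at the cost of a single $n-1$, is exactly what achieves this; beyond that, every estimate is a one-line Neumann-series bound.
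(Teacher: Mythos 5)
Your proof is correct and takes essentially the same approach as the paper's: both establish invertibility via the Neumann series for $\mat{I}+\D'(\cdot)$ (using the $(n-1)\norm{\cdot}_\mathrm{max}$ bound on the induced $1$- and $\infty$-norms of a zero-diagonal matrix), and both handle part (a) with the resolvent identity $\mat{A}^{-1}-\mat{B}^{-1}=\mat{A}^{-1}(\mat{B}-\mat{A})\mat{B}^{-1}$ together with the mixed-norm chain $\norm{\cdot}_\mathrm{max}\leq\norm{\cdot}_\infty\,\norm{\cdot}_\mathrm{max}\,\norm{\cdot}_1$. The only cosmetic difference is that you derive (b) and (c) from $\norm{\cdot}_\mathrm{max}\leq\norm{\cdot}_\infty$ and the rearrangement $\mat{B}^{-1}=\mat{I}-\D'(\mat{B})\mat{B}^{-1}$, whereas the paper cites the series expansion of $\mat{B}^{-1}$ directly; both yield the stated bounds.
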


\begin{corollary} \label{C:inverse}
    Let $\mat{A} \in \R^{n \times n}$ with $\D(\mat{A}) = \mat{I}$. If
    $\norm{\D'(\mat{A})}_\mathrm{max} \leq \epsilon$ and $(n-1)\epsilon < 1$, then $\mat{A}$
    is invertible and
    \[
        \norm{\mat{A}^{-1} - \mat{I}}_\mathrm{max} \leq \frac{\epsilon}{1 - (n-1)\epsilon}.
    \]
\end{corollary}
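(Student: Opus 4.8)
The plan is to obtain \Cref{C:inverse} as the special case $\mat{B} = \mat{I}$ of \Cref{L:inverse}. First I would check that the hypotheses of \Cref{L:inverse} hold with this choice: $\D(\mat{I}) = \mat{I}$ as required, and $\D'(\mat{I}) = \mat{0}$, so one may take $\epsilon' = 0$; then $\D'(\mat{A} - \mat{I}) = \D'(\mat{A})$, whence $\norm{\D'(\mat{A} - \mat{I})}_\mathrm{max} \leq \epsilon$, and the hypothesis $(n-1)(\epsilon + \epsilon') < 1$ collapses to $(n-1)\epsilon < 1$, which is precisely our assumption. Applying part (a) of \Cref{L:inverse} and using $\mat{I}^{-1} = \mat{I}$ then yields that $\mat{A}$ is invertible with
\[
    \norm{\mat{A}^{-1} - \mat{I}}_\mathrm{max} = \norm{\mat{A}^{-1} - \mat{I}^{-1}}_\mathrm{max} \leq \frac{\epsilon}{[1 - (n-1)\epsilon][1 - (n-1)\cdot 0]} = \frac{\epsilon}{1 - (n-1)\epsilon},
\]
which is the claimed bound.

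Alternatively, I would give a short self-contained argument by a Neumann series, which is essentially how \Cref{L:inverse} is proved in this degenerate case. Write $\mat{A} = \mat{I} + \mat{E}$ with $\mat{E} \defeq \D'(\mat{A})$. Since $\mat{E}$ has zero diagonal and all entries of absolute value at most $\epsilon$, each of its rows has $1$-norm at most $(n-1)\epsilon$, so $\norm{\mat{E}}_\infty \leq (n-1)\epsilon < 1$; hence $\mat{A}$ is invertible and $\mat{A}^{-1} = \sum_{k=0}^\infty (-\mat{E})^k$ (the series converging absolutely in the $\infty$-norm, hence in any norm). Thus $\mat{A}^{-1} - \mat{I} = \sum_{k=1}^\infty (-\mat{E})^k$, and using the mixed sub-multiplicative bound $\norm{\mat{C}\mat{D}}_\mathrm{max} \leq \norm{\mat{C}}_\infty \norm{\mat{D}}_\mathrm{max}$ together with sub-multiplicativity of $\norm{\cdot}_\infty$, I would estimate $\norm{\mat{E}^k}_\mathrm{max} \leq \norm{\mat{E}}_\infty^{k-1}\norm{\mat{E}}_\mathrm{max} \leq ((n-1)\epsilon)^{k-1}\epsilon$ for $k \geq 1$. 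Summing the resulting geometric series gives $\norm{\mat{A}^{-1} - \mat{I}}_\mathrm{max} \leq \sum_{k=1}^\infty ((n-1)\epsilon)^{k-1}\epsilon = \epsilon / (1 - (n-1)\epsilon)$.

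I do not expect any genuine obstacle: the statement is an immediate corollary, and the only points requiring a sliver of care are verifying that $\epsilon' = 0$ is an admissible choice in \Cref{L:inverse} and, in the self-contained version, the inequality $\norm{\mat{C}\mat{D}}_\mathrm{max} \leq \norm{\mat{C}}_\infty\norm{\mat{D}}_\mathrm{max}$, which follows at once from $\lvert(\mat{C}\mat{D})_{ij}\rvert \leq \sum_k \lvert c_{ik}\rvert\lvert d_{kj}\rvert \leq \norm{\mat{D}}_\mathrm{max}\sum_k\lvert c_{ik}\rvert$. I would present the one-line deduction from \Cref{L:inverse} as the proof, since it is cleanest, and mention the Neumann-series computation only if a fully self-contained treatment is wanted.
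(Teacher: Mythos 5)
Your deduction from \Cref{L:inverse} with $\mat{B} = \mat{I}$ and $\epsilon' = 0$ is correct and is precisely how the paper intends the corollary to follow (the proof printed after \Cref{C:inverse} is actually the proof of \Cref{L:inverse}, and the corollary is the special case $\mat{B} = \mat{I}$). Your self-contained Neumann-series computation is also sound and is in substance the same argument the paper uses to prove the lemma, specialized to $\mat{B} = \mat{I}$.
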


\begin{proof}
    We first note that $\mat{A}^{-1} = [\mat{I} - (-\D'(\mat{A}))]^{-1} = \sum_{k=0}^\infty
    (-\D'(\mat{A}))^k$, where the series converges since $\norm{\D'(\mat{A})}_1 \leq (n-1)
    \norm{\D'(\mat{A})}_\mathrm{max} \leq (n-1) (\epsilon + \epsilon') < 1$ (and since the
    1-norm is submultiplicative), and similarly for $\mat{B}$. Then, since
    $\norm{\D'(\mat{A})}_\infty \leq (n-1)(\epsilon + \epsilon')$ and $\norm{\D'(\mat{B})}_1
    \leq (n-1)\epsilon'$, we have
    \begin{align*}
        \norm{\mat{A}^{-1} - \mat{B}^{-1}}_\mathrm{max}
        &= \norm{\mat{A}^{-1} (\mat{B} - \mat{A}) \mat{B}^{-1}}_\mathrm{max} \\
        &\leq \norm{\mat{A}^{-1}}_\infty \norm{\mat{B} - \mat{A}}_\mathrm{max}
        \norm{\mat{B}^{-1}}_1 \\
        &\leq \frac{1}{1 - (n-1)(\epsilon + \epsilon')} \cdot \epsilon \cdot
        \frac{1}{1 - (n-1) \epsilon'}.
    \end{align*}
    The other two inequalities are consequences of the series expansion $\mat{B}^{-1} = \mat{I} +
    \sum_{k=1}^\infty (-\D'(\mat{B}))^k$.
\end{proof}

\begin{lemma} \label{L:product-o}
    Let $\mat{A}, \mat{B} \in \R^{n \times n}$. If $\norm{\D(\mat{A})}_\mathrm{max} \leq 1$,
    $\norm{\D(\mat{A}) - \mat{I}}_\mathrm{max} \leq \epsilon_\mathrm{A} \leq 1$,
    $\norm{\D'(\mat{A})}_\mathrm{1, 2} \leq \epsilon_\mathrm{A}'$, and $\norm{\mat{B} -
    \mat{I}}_\mathrm{max} \leq \epsilon_\mathrm{B} \leq 1$, then
    \begin{enumerate}[label=\textup(\alph*\textup)]
        \item
            $\norm{\D(\mat{A}\mat{B}) - \mat{I}}_\mathrm{max}
            \leq
            1 - (1 - \epsilon_\mathrm{A})(1 - \epsilon_\mathrm{B}) + 
            (n-1) \epsilon_\mathrm{A}' \epsilon_\mathrm{B}^{};$
        \item
            $\norm{\D'(\mat{A}\mat{B})}_\mathrm{1, 2}
            \leq
            (n-1)^{\frac{1}{2}} \epsilon_\mathrm{B}^{} + 
            \epsilon_\mathrm{A}' (1 + \epsilon_\mathrm{B}^{}) +
            (n-1) \epsilon_\mathrm{A}' \epsilon_\mathrm{B}^{}.$
    \end{enumerate}
\end{lemma}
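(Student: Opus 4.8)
The plan is to split $\mat{A}$ into its diagonal and off-diagonal parts, write $\mat{B} = \mat{I} + (\mat{B} - \mat{I})$, expand $\mat{A}\mat{B} = \D(\mat{A})\mat{B} + \D'(\mat{A})\mat{B}$, and then read off the diagonal and off-diagonal parts of the product. First I would record what the hypotheses say pointwise: combining $\norm{\D(\mat{A})}_\mathrm{max} \leq 1$ with $\norm{\D(\mat{A}) - \mat{I}}_\mathrm{max} \leq \epsilon_\mathrm{A} \leq 1$ gives $0 \leq 1 - \epsilon_\mathrm{A} \leq a_{ii} \leq 1$ for every $i$; $\norm{\D'(\mat{A})}_\mathrm{1, 2} \leq \epsilon_\mathrm{A}'$ means every column of $\D'(\mat{A})$ has Euclidean norm at most $\epsilon_\mathrm{A}'$ (so in particular $\abs{a_{ij}} \leq \epsilon_\mathrm{A}'$ for $i \neq j$), and moreover $\norm{\D'(\mat{A}) \vec{v}}_2 \leq \epsilon_\mathrm{A}' \norm{\vec{v}}_1$ for every $\vec{v}$ by the very definition of the $(1, 2)$-operator norm; finally $\norm{\mat{B} - \mat{I}}_\mathrm{max} \leq \epsilon_\mathrm{B} \leq 1$ means $\abs{b_{ij}} \leq \epsilon_\mathrm{B}$ for $i \neq j$ and $1 - \epsilon_\mathrm{B} \leq b_{ii} \leq 1 + \epsilon_\mathrm{B}$.

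For part (a), since $\D'(\mat{A})$ has zero diagonal, the $(i, i)$-entry of $\mat{A}\mat{B} - \mat{I}$ is $(a_{ii} b_{ii} - 1) + \sum_{k \neq i} a_{ik} b_{ki}$. I would bound the first summand using the fact that $a_{ii} \in [1 - \epsilon_\mathrm{A}, 1]$ and $b_{ii} \in [1 - \epsilon_\mathrm{B}, 1 + \epsilon_\mathrm{B}]$ are both nonnegative, so $a_{ii} b_{ii} \in [(1 - \epsilon_\mathrm{A})(1 - \epsilon_\mathrm{B}),\, 1 + \epsilon_\mathrm{B}]$ and hence $\abs{a_{ii} b_{ii} - 1} \leq \max \set{1 - (1 - \epsilon_\mathrm{A})(1 - \epsilon_\mathrm{B}),\, \epsilon_\mathrm{B}} = 1 - (1 - \epsilon_\mathrm{A})(1 - \epsilon_\mathrm{B})$, the last equality because the difference of these two quantities is $\epsilon_\mathrm{A}(1 - \epsilon_\mathrm{B}) \geq 0$. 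The second summand is bounded by $\sum_{k \neq i} \abs{a_{ik}} \abs{b_{ki}} \leq (n - 1) \epsilon_\mathrm{A}' \epsilon_\mathrm{B}$. Adding these and taking the maximum over $i$ gives (a).

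For part (b), I would use $\D'(\mat{A}\mat{B}) = \D'(\D(\mat{A})\mat{B}) + \D'(\D'(\mat{A})\mat{B})$ and bound the two $(1, 2)$-norms separately, one column at a time. The $j$-th column of $\D'(\D(\mat{A})\mat{B})$ has entries $a_{ii} b_{ij}$ over $i \neq j$, hence Euclidean norm at most $\bigl(\sum_{i \neq j} 1 \cdot \epsilon_\mathrm{B}^2\bigr)^{1/2} = (n-1)^{1/2} \epsilon_\mathrm{B}$, so $\norm{\D'(\D(\mat{A})\mat{B})}_\mathrm{1, 2} \leq (n-1)^{1/2}\epsilon_\mathrm{B}$. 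For the second term, deleting the diagonal can only shrink column Euclidean norms, so $\norm{\D'(\D'(\mat{A})\mat{B})}_\mathrm{1, 2} \leq \norm{\D'(\mat{A})\mat{B}}_\mathrm{1, 2} = \max_j \norm{\D'(\mat{A})(\mat{B}\vec{e}_j)}_2 \leq \epsilon_\mathrm{A}' \max_j \norm{\mat{B}\vec{e}_j}_1$ by the operator-norm inequality above, and $\norm{\mat{B}\vec{e}_j}_1 \leq \norm{\vec{e}_j}_1 + \norm{(\mat{B} - \mat{I})\vec{e}_j}_1 \leq 1 + \epsilon_\mathrm{B} + (n-1)\epsilon_\mathrm{B}$, which gives $\norm{\D'(\D'(\mat{A})\mat{B})}_\mathrm{1, 2} \leq \epsilon_\mathrm{A}'(1 + \epsilon_\mathrm{B}) + (n-1)\epsilon_\mathrm{A}'\epsilon_\mathrm{B}$. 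Summing the two contributions yields exactly the bound in (b).

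None of this is deep; the main thing I expect to have to be careful about is that \emph{both} diagonal hypotheses on $\mat{A}$ are genuinely needed in part (a) to confine $a_{ii}$ to $[1 - \epsilon_\mathrm{A}, 1]$ and thereby obtain the multiplicative estimate $1 - (1 - \epsilon_\mathrm{A})(1 - \epsilon_\mathrm{B})$ rather than a weaker additive one, and that throughout part (b) one must keep track of $\norm{\cdot}_\mathrm{1, 2}$ as the largest column Euclidean norm, so that factors controlled only entrywise contribute a factor $n - 1$ while factors controlled in Euclidean norm contribute merely $(n-1)^{1/2}$; getting the two separate pieces of each bound to land on the exact constants stated is then just careful bookkeeping.
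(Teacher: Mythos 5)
Your proof is correct and follows essentially the same approach as the paper: split $\mat{A}$ (and, implicitly, $\mat{B}$) into diagonal and off-diagonal parts, then bound the diagonal entries and column Euclidean norms of the resulting pieces, keeping track of which factors are controlled entrywise (costing $n-1$) versus in $\ell^2$ (costing $(n-1)^{1/2}$). The only cosmetic difference is that the paper expands $\mat{AB}$ into all four products $\D(\mat{A})\D(\mat{B})$, $\D(\mat{A})\D'(\mat{B})$, $\D'(\mat{A})\D(\mat{B})$, $\D'(\mat{A})\D'(\mat{B})$ up front, whereas you only split $\mat{A}$ and fold the splitting of $\mat{B}$ into the $\ell^1$ estimate $\norm{\mat{B}\vec{e}_j}_1 \leq (1+\epsilon_\mathrm{B}) + (n-1)\epsilon_\mathrm{B}$; the resulting constants are identical.
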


\begin{proof}
    Observe that 
    \begin{align*}
        \mat{AB} 
        &= (\D(\mat{A}) + \D'(\mat{A}))(\D(\mat{B}) + \D'(\mat{B})) \\
        &= \D(\mat{A})\D(\mat{B}) + \D(\mat{A})\D'(\mat{B}) + 
        \D'(\mat{A})\D(\mat{B}) + \D'(\mat{A})\D'(\mat{B}), 
    \end{align*}
    where the first term is diagonal and the second and third
    terms have zero diagonal parts. As a result, 
    \begin{align*} 
        \D(\mat{AB} - \mat{I}) 
        &= \D(\D(\mat{A})\D(\mat{B}) - \mat{I}) + \D(\D'(\mat{A})\D'(\mat{B})), \\ 
        \D'(\mat{AB}) 
        &= \D'(\D(\mat{A})\D'(\mat{B})) + \D'(\D'(\mat{A})\D(\mat{B})) + \D'(\D'(\mat{A})\D'(\mat{B})).
    \end{align*} 
    Bounding each of these terms separately yields the results above.
\end{proof}

\begin{lemma} \label{L:normalization-o}
    Suppose that $\mat{A} = \mat{B} \mat{V}$ for some $\mat{B} \in \R^{n \times n}$
    with orthonormal columns and $\mat{V} \in \R^{n \times n}$.
    If $\norm{\D(\mat{\Lambda}^{-1} \mat{V}) -
    \mat{I}}_\mathrm{max} \leq \epsilon < 1$ and $\norm{\D'(\mat{\Lambda}^{-1}
    \mat{V})}_{1, 2} \leq \epsilon'$ for some diagonal matrix $\mat{\Lambda} =
    \diag(\lambda_1, \dots, \lambda_n)$, then
    \[
        \max_{j \in [n]} \, \abs{\sin \angle(\vec{a}_j, \vec{b}_j)}
        \leq 
        \frac{\kappa \epsilon'}{1 - \epsilon},
    \]
    where $\kappa \defeq \max_{j \in [n]} \abs{\lambda_j} / \min_{j \in [n]}
    \abs{\lambda_j}$.
\end{lemma}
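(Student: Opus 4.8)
\textbf{Proof proposal for \Cref{L:normalization-o}.}
The plan is to reduce the statement to a direct computation of the angle between a column of $\mat{A}$ and the corresponding column of $\mat{B}$, using that $\mat{B}$ has orthonormal columns. Writing $\vec{a}_j = \mat{B}\vec{v}_j$, where $\vec{v}_j$ is the $j$\textsuperscript{th} column of $\mat{V}$, orthonormality gives $\norm{\vec{a}_j}^2 = \norm{\vec{v}_j}^2 = \sum_i v_{ij}^2$ and $\inner{\vec{a}_j}{\vec{b}_j} = v_{jj}$. Consequently
\[
    \sin^2 \angle(\vec{a}_j, \vec{b}_j)
    = 1 - \frac{\inner{\vec{a}_j}{\vec{b}_j}^2}{\norm{\vec{a}_j}^2 \norm{\vec{b}_j}^2}
    = 1 - \frac{v_{jj}^2}{\norm{\vec{v}_j}^2}
    = \frac{\sum_{i \neq j} v_{ij}^2}{\norm{\vec{v}_j}^2},
\]
so that $\abs{\sin \angle(\vec{a}_j, \vec{b}_j)} = (\sum_{i \neq j} v_{ij}^2)^{\frac{1}{2}} / \norm{\vec{v}_j} \leq (\sum_{i \neq j} v_{ij}^2)^{\frac{1}{2}} / \abs{v_{jj}}$.

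Next I would pass to the scaled matrix $\mat{W} \defeq \mat{\Lambda}^{-1} \mat{V}$, whose entries satisfy $v_{ij} = \lambda_i w_{ij}$ since $\mat{\Lambda}$ is diagonal. The hypothesis $\norm{\D(\mat{W}) - \mat{I}}_\mathrm{max} \leq \epsilon < 1$ gives $\abs{w_{jj}} \geq 1 - \epsilon > 0$, hence $\abs{v_{jj}} \geq (\min_i \abs{\lambda_i})(1 - \epsilon) > 0$ (in particular $\norm{\vec{v}_j} \geq \abs{v_{jj}} > 0$, so the angle is well defined). For the numerator, the key observation is that the $(1,2)$-norm of a matrix equals the largest $2$-norm among its columns, so $\norm{\D'(\mat{W})}_{1,2} \leq \epsilon'$ means $(\sum_{i \neq j} w_{ij}^2)^{\frac{1}{2}} \leq \epsilon'$ for every $j$; therefore $\sum_{i \neq j} v_{ij}^2 \leq (\max_i \abs{\lambda_i})^2 \sum_{i \neq j} w_{ij}^2 \leq (\max_i \abs{\lambda_i})^2 (\epsilon')^2$. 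Combining the two bounds,
\[
    \abs{\sin \angle(\vec{a}_j, \vec{b}_j)}
    \leq \frac{(\max_i \abs{\lambda_i}) \epsilon'}{(\min_i \abs{\lambda_i})(1 - \epsilon)}
    = \frac{\kappa \epsilon'}{1 - \epsilon},
\]
and taking the maximum over $j$ finishes the argument.

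I do not expect a genuine obstacle here: the entire proof is Pythagoras plus the elementary fact that $\mat{\Lambda}$ being diagonal lets the row-scaling $\lambda_i$ factor cleanly out of both the off-diagonal column sum and the diagonal entry. The only point requiring a moment's care is the identification of $\norm{\cdot}_{1,2}$ with the maximal column $2$-norm (the $\ell_1$ unit ball's extreme points are $\pm \vec{e}_k$, and $\vec{x} \mapsto \norm{\mat{W}\vec{x}}_2$ is convex), and keeping track of signs so that the bound $\abs{w_{jj}} \geq 1 - \epsilon$ — rather than $w_{jj} \geq 1 - \epsilon$ — is what is actually used.
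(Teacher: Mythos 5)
Your proof is correct and takes essentially the same approach as the paper: both use orthonormality of $\mat{B}$ to reduce to a Pythagorean computation on the columns of $\mat{V}$, bound $\norm{\vec{a}_j}$ from below by $\abs{v_{jj}} = \abs{\lambda_j}\abs{w_{jj}} \geq \abs{\lambda_j}(1-\epsilon)$, and bound the off-diagonal column $2$-norm of $\mat{W}$ by $\epsilon'$ via the $(1,2)$-norm. The only cosmetic difference is that the paper keeps everything written in terms of $\lambda_i w_{ij}$ rather than $v_{ij}$; your remark that $\norm{\cdot}_{1,2}$ is the maximal column $2$-norm is the same fact the paper uses tacitly.
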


\begin{proof}
    Let $\mat{W} = \mat{\Lambda}^{-1} \mat{V}$ so that $\mat{A} = \mat{B} \mat{\Lambda}
    \mat{W}$ with $\norm{\D(\mat{W}) - \mat{I}}_\mathrm{max} \leq \epsilon$ and
    $\norm{\D'(\mat{W})}_{1, 2} \leq \epsilon'$. Then $\vec{a}_j = \sum_{i \in [n]}
    \lambda_i w_{ij} \vec{b}_i$, so $\norm{\vec{a}_j}^2 = \sum_{i} \lambda_i^2
    w_{ij}^2$ and
    $\inner{\vec{a}_j}{\vec{b}_j}^2 = \lambda_j^2 w_{jj}^2$.
    Hence $\norm{\vec{a}_j}^2 - \inner{\vec{a}_j}{\vec{b}_j}^2 = 
    \sum_{i \neq j} \lambda_i^2 w_{ij}^2$.
    On the other hand, $\norm{\vec{a}_j}^2 \geq \lambda_j^2 w_{jj}^2 \geq 
    \lambda_j^2 (1-\epsilon)^2$.
    As a result,
    \begin{align*}
        \sin^2 \angle(\vec{a}_j, \vec{b}_j)
        &= \frac{\norm{\vec{a}_j}^2 - \inner{\vec{a}_j}{\vec{b}_j}^2}{\norm{\vec{a}_j}^2}
        \leq \frac{\kappa^2 (\epsilon')^2}{(1-\epsilon)^2}.
        \qedhere
    \end{align*}
\end{proof}

\begin{lemma} \label{L:product-i}
    Let $\mat{A}, \mat{\tilde{A}}, \mat{B}, \mat{\tilde{B}} \in \R^{n \times n}$. 
    If $\norm{\D(\mat{\tilde{A}} - \mat{A})}_\mathrm{max} \leq \epsilon_\mathrm{A}$ and
    $\norm{\D'(\mat{\tilde{A}} - \mat{A})}_\mathrm{max} \leq \epsilon_\mathrm{A}'$, and
    likewise for $\mat{B}$ and $\mat{\tilde{B}}$, then
    \begin{enumerate}[label=\textup(\alph*\textup)]
        \item
            \begin{align*}
                \norm{\D(\mat{\tilde{A}}\mat{\tilde{B}} - \mat{A}\mat{B})}_\mathrm{max}
                &\leq
                (n-1) \norm{\D'(\mat{\tilde{A}})}_\mathrm{max} \cdot \epsilon_\mathrm{B}' + 
                \norm{\D(\mat{\tilde{A}})}_\mathrm{max} \cdot \epsilon_\mathrm{B}^{} + {} \\
                &\hphantom{{}\leq{}}
                (n-1) \norm{\D'(\mat{B})}_\mathrm{max} \cdot \epsilon_\mathrm{A}' + 
                \norm{\D(\mat{B})}_\mathrm{max} \cdot \epsilon_\mathrm{A}^{}.
            \end{align*}
        \item
            \begin{align*}
                \norm{\D'(\mat{\tilde{A}}\mat{\tilde{B}} - \mat{A}\mat{B})}_\mathrm{max}
                &\leq
                (n-1) \norm{\mat{\tilde{A}}}_\mathrm{max} \cdot \epsilon_\mathrm{B}' + 
                \norm{\D'(\mat{\tilde{A}})}_\mathrm{max} \cdot \epsilon_\mathrm{B}^{} + {} \\
                &\hphantom{{}\leq{}}
                (n-1) \norm{\mat{B}}_\mathrm{max} \cdot \epsilon_\mathrm{A}' + 
                \norm{\D'(\mat{B})}_\mathrm{max} \cdot \epsilon_\mathrm{A}^{}.
            \end{align*}
    \end{enumerate}
\end{lemma}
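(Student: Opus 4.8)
The plan is to reduce the claim to one algebraic identity followed by a careful entrywise estimate. Write $\mat{E} \defeq \mat{\tilde{A}} - \mat{A}$ and $\mat{F} \defeq \mat{\tilde{B}} - \mat{B}$, so that the hypotheses read $\norm{\D(\mat{E})}_\mathrm{max} \leq \epsilon_\mathrm{A}$, $\norm{\D'(\mat{E})}_\mathrm{max} \leq \epsilon_\mathrm{A}'$, and likewise $\norm{\D(\mat{F})}_\mathrm{max} \leq \epsilon_\mathrm{B}$, $\norm{\D'(\mat{F})}_\mathrm{max} \leq \epsilon_\mathrm{B}'$. The starting point is the ``add and subtract'' factorization
\[
    \mat{\tilde{A}}\mat{\tilde{B}} - \mat{A}\mat{B} = \mat{\tilde{A}}\mat{F} + \mat{E}\mat{B},
\]
which isolates a term in which only the right factor is perturbed and one in which only the left factor is perturbed. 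By the triangle inequality it suffices to bound $\D(\cdot)$ and $\D'(\cdot)$ of each of $\mat{\tilde{A}}\mat{F}$ and $\mat{E}\mat{B}$ in the maximum-norm; since $\D$, $\D'$, and $\norm{\cdot}_\mathrm{max}$ are all invariant under transposition, the estimate for $\mat{E}\mat{B}$ follows from that for $\mat{\tilde{A}}\mat{F}$ upon interchanging the roles of the two factors (and the subscripts $\mathrm{A} \leftrightarrow \mathrm{B}$), so I would only write out the estimates for $\mat{\tilde{A}}\mat{F}$.

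For a diagonal entry, split off the $k = i$ term of the matrix product: $(\mat{\tilde{A}}\mat{F})_{ii} = \tilde{a}_{ii} f_{ii} + \sum_{k \neq i} \tilde{a}_{ik} f_{ki}$. In the first summand $\tilde{a}_{ii}$ is a diagonal entry of $\mat{\tilde{A}}$ and $f_{ii}$ a diagonal entry of $\mat{F}$, so it is at most $\norm{\D(\mat{\tilde{A}})}_\mathrm{max}\,\epsilon_\mathrm{B}$; in the remaining sum of $n-1$ terms, each $\tilde{a}_{ik}$ with $k \neq i$ is off-diagonal and each $f_{ki}$ with $k \neq i$ is an off-diagonal entry of $\mat{F}$, so the sum is at most $(n-1)\norm{\D'(\mat{\tilde{A}})}_\mathrm{max}\,\epsilon_\mathrm{B}'$. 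Adding the mirror bound for $(\mat{E}\mat{B})_{ii}$ (obtained by splitting off $k = i$ again, which produces $\norm{\D(\mat{B})}_\mathrm{max}\,\epsilon_\mathrm{A}$ and $(n-1)\norm{\D'(\mat{B})}_\mathrm{max}\,\epsilon_\mathrm{A}'$) gives part (a).

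For an off-diagonal entry ($i \neq j$), split off the $k = j$ term instead: $(\mat{\tilde{A}}\mat{F})_{ij} = \tilde{a}_{ij} f_{jj} + \sum_{k \neq j} \tilde{a}_{ik} f_{kj}$. Here $\tilde{a}_{ij}$ is off-diagonal (since $i \neq j$) and $f_{jj}$ is diagonal, contributing at most $\norm{\D'(\mat{\tilde{A}})}_\mathrm{max}\,\epsilon_\mathrm{B}$; in the sum each $f_{kj}$ with $k \neq j$ is an off-diagonal entry of $\mat{F}$, hence $\leq \epsilon_\mathrm{B}'$, but the factors $\tilde{a}_{ik}$ now run over the diagonal entry $\tilde{a}_{ii}$ (at $k = i$) as well as off-diagonal ones, so they can only be controlled uniformly by $\norm{\mat{\tilde{A}}}_\mathrm{max}$, giving $(n-1)\norm{\mat{\tilde{A}}}_\mathrm{max}\,\epsilon_\mathrm{B}'$. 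Adding the mirror bound for $(\mat{E}\mat{B})_{ij}$ (split off $k = i$, producing $\norm{\D'(\mat{B})}_\mathrm{max}\,\epsilon_\mathrm{A}$ and $(n-1)\norm{\mat{B}}_\mathrm{max}\,\epsilon_\mathrm{A}'$) yields part (b).

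There is no deep obstacle here; the only point demanding care — and where a hasty argument yields a slightly wrong inequality — is the asymmetry in the off-diagonal case. Because the inner sum $\sum_{k \neq j}$ is forced to include the index $k = i$, the factor $\mat{\tilde{A}}$ there cannot be bounded by its off-diagonal maximum-norm alone, which is exactly why $\norm{\mat{\tilde{A}}}_\mathrm{max}$ and $\norm{\mat{B}}_\mathrm{max}$ (rather than their $\D'$-parts) appear in part (b), whereas in part (a) the diagonal term is genuinely split off and the sharper norms $\norm{\D(\cdot)}_\mathrm{max}$ and $\norm{\D'(\cdot)}_\mathrm{max}$ survive. Everything else is routine bookkeeping of which index is diagonal and which is off-diagonal.
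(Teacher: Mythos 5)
Your proof is correct and takes essentially the same approach as the paper: both start from the ``add and subtract'' factorization $\mat{\tilde{A}}\mat{\tilde{B}} - \mat{A}\mat{B} = \mat{\tilde{A}}(\mat{\tilde{B}} - \mat{B}) + (\mat{\tilde{A}} - \mat{A})\mat{B}$ and then bound each term entrywise. The paper states this in a single sentence and leaves the index bookkeeping implicit; you have simply carried it out explicitly, correctly identifying the subtlety that for the off-diagonal entries the inner sum is forced to include a diagonal entry of the outer factor, which is why $\norm{\cdot}_\mathrm{max}$ rather than $\norm{\D'(\cdot)}_\mathrm{max}$ appears in part (b).
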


\begin{proof}
    Write $\mat{\tilde{A}} \mat{\tilde{B}} - \mat{A} \mat{B} =
    \mat{\tilde{A}}(\mat{\tilde{B}} - \mat{B}) + (\mat{\tilde{A}} - \mat{A}) \mat{B}$ and
    apply the triangle inequality to each term.
\end{proof}

\begin{lemma} \label{L:normalization-i}
    Suppose that $\mat{A} = \mat{B} \mat{V}$ for some $\mat{B} \in \R^{n \times n}$
    with normalized columns and $\mat{V} \in \R^{n \times n}$.
    If $\norm{\D(\mat{\Lambda}^{-1} \mat{V}) -
    \mat{I}}_\mathrm{max} \leq \epsilon < 1$ and $\norm{\D'(\mat{\Lambda}^{-1}
    \mat{V})}_\mathrm{max} \leq \epsilon'$ for some diagonal matrix $\mat{\Lambda} =
    \diag(\lambda_1, \dots, \lambda_n)$, and $4(n-1)\kappa \epsilon' +
    [(n-1)\kappa\epsilon']^2 < (1-\epsilon)^2$, then
    \[
        \max_{j \in [n]} \, \abs{\sin \angle(\vec{a}_j, \vec{b}_j)}
        \leq 
        \left(\frac{2}{(1 - \epsilon)^2 - 4(n-1)\kappa \epsilon' -
        [(n-1)\kappa\epsilon']^2}\right)^\frac{1}{2} (n-1) \kappa \epsilon',
    \]
    where $\kappa \defeq \max_{j \in [n]} \abs{\lambda_j} / \min_{j \in [n]}
    \abs{\lambda_j}$.
\end{lemma}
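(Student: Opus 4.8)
The plan is to follow the proof of \Cref{L:normalization-o}, modified to accommodate the weaker assumption that $\mat{B}$ has only normalized (rather than orthonormal) columns. Setting $\mat{W} \defeq \mat{\Lambda}^{-1} \mat{V}$, we have $\mat{A} = \mat{B} \mat{\Lambda} \mat{W}$, so that for each $j$,
\[
    \vec{a}_j = \sum_i \lambda_i w_{ij} \vec{b}_i = \lambda_j w_{jj} \vec{b}_j + \vec{e}_j,
    \qquad
    \vec{e}_j \defeq \sum_{i \neq j} \lambda_i w_{ij} \vec{b}_i,
\]
where the hypotheses give $w_{jj} \geq 1 - \epsilon > 0$ and $\abs{w_{ij}} \leq \epsilon'$ for $i \neq j$. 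The key observation is that $\vec{a}_j$ and $\vec{e}_j$ differ only by a scalar multiple of the unit vector $\vec{b}_j$, so the component of $\vec{a}_j$ orthogonal to $\vec{b}_j$ coincides with the component $\vec{e}_j^{\perp}$ of $\vec{e}_j$ orthogonal to $\vec{b}_j$; hence
\[
    \abs{\sin \angle(\vec{a}_j, \vec{b}_j)}
    = \frac{\norm{\vec{e}_j^{\perp}}}{\norm{\vec{a}_j}}
    \leq \frac{\norm{\vec{e}_j}}{\norm{\vec{a}_j}}.
\]

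It then remains to bound the numerator from above and the denominator from below. For the numerator, since the $\vec{b}_i$ are unit vectors, the triangle inequality gives $\norm{\vec{e}_j} \leq \sum_{i \neq j} \abs{\lambda_i} \abs{w_{ij}} \leq (n-1)(\max_i \abs{\lambda_i}) \epsilon' = (n-1)\kappa \epsilon' \min_j \abs{\lambda_j}$. For the denominator, the reverse triangle inequality gives $\norm{\vec{a}_j} \geq \abs{\lambda_j w_{jj}} - \norm{\vec{e}_j} \geq [(1 - \epsilon) - (n-1)\kappa\epsilon']\min_j\abs{\lambda_j}$, which is positive because the hypothesis $4(n-1)\kappa\epsilon' + [(n-1)\kappa\epsilon']^2 < (1-\epsilon)^2$ forces $(n-1)\kappa\epsilon' < 1 - \epsilon$.

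Writing $b \defeq (n-1)\kappa\epsilon'$, these estimates combine to give $\sin^2 \angle(\vec{a}_j, \vec{b}_j) \leq b^2 / ((1-\epsilon) - b)^2$, and I would finish using the elementary inequality $((1-\epsilon) - b)^2 \geq \frac{1}{2}[(1-\epsilon)^2 - 4b - b^2]$ --- equivalent to $(1-\epsilon)^2 + 4\epsilon b + 3b^2 \geq 0$, which always holds --- to reach the stated bound. I do not anticipate a genuine obstacle here; the only point of care relative to \Cref{L:normalization-o} is that, because $\mat{B}$ is no longer orthonormal, $\norm{\vec{a}_j}^2$ and $\inner{\vec{a}_j}{\vec{b}_j}^2$ cannot be evaluated exactly via Parseval/Bessel and must instead be handled through $\vec{e}_j$ directly. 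This is precisely why the factor $n-1$ (rather than $(n-1)^{1/2}$) and the maximum-norm of $\D'(\mat{\Lambda}^{-1}\mat{V})$ appear in the conclusion, and why the denominator acquires the correction reflected in the quantity $4b + b^2$.
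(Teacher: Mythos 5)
Your proof is correct, and it is organized differently from the paper's. The paper's proof computes $\sin^2\angle(\vec{a}_j,\vec{b}_j) = \bigl(\norm{\vec{a}_j}^2 - \inner{\vec{a}_j}{\vec{b}_j}^2\bigr)/\norm{\vec{a}_j}^2$ by expanding both quantities as double sums $\sum_{i,i'}\lambda_i\lambda_{i'}w_{ij}w_{i'j}\,(\cdots)$ over the entries of the Gram matrix of $\mat{B}$, and bounds each term separately: the numerator becomes $\sum_{i,i'\neq j}\lambda_i\lambda_{i'}w_{ij}w_{i'j}\bigl(\inner{\vec{b}_i}{\vec{b}_{i'}} - \inner{\vec{b}_i}{\vec{b}_j}\inner{\vec{b}_{i'}}{\vec{b}_j}\bigr)$, bounded by $2(\epsilon')^2(n-1)^2(\max_i\abs{\lambda_i})^2$, while the denominator is bounded below by expanding $\norm{\vec{a}_j}^2$ into a main term, a cross term (absorbing the $4b$), and a remainder (absorbing the $b^2$). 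The stated form of the bound then falls out directly. You instead exploit the geometric identity $\abs{\sin\angle(\vec{a}_j,\vec{b}_j)} = \norm{\vec{a}_j^\perp}/\norm{\vec{a}_j} = \norm{\vec{e}_j^\perp}/\norm{\vec{a}_j}$, bound the numerator via $\norm{\vec{e}_j^\perp}\leq\norm{\vec{e}_j}$ and the triangle inequality (which, incidentally, is sharper than the paper's numerator bound since it avoids the factor $2$), and bound $\norm{\vec{a}_j}$ from below via the reverse triangle inequality. This gives the cleaner intermediate bound $b/((1-\epsilon)-b)$ with $b = (n-1)\kappa\epsilon'$; you then need a short algebraic inequality, $((1-\epsilon)-b)^2 \geq \tfrac12[(1-\epsilon)^2 - 4b - b^2]$, to force your bound into the lemma's stated form. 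Your route avoids the double-sum bookkeeping and is arguably more transparent, at the cost of appearing to reverse-engineer the stated constant; the paper's route explains where the particular denominator $(1-\epsilon)^2 - 4b - b^2$ comes from. Both are correct, and all your steps check out, including the use of the hypothesis to guarantee $(1-\epsilon) - b > 0$.
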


\begin{proof}
    Let $\mat{W} = \mat{\Lambda}^{-1} \mat{V}$ so that $\mat{A} = \mat{B} \mat{\Lambda}
    \mat{W}$ with $\norm{\D(\mat{W}) - \mat{I}}_\mathrm{max} \leq \epsilon$ and
    $\norm{\D'(\mat{W})}_\mathrm{max} \leq \epsilon'$. Then $\vec{a}_j = \sum_{i \in [n]}
    \lambda_i w_{ij} \vec{b}_i$, so 
    \begin{align*}
        \norm{\vec{a}_j}^2 
        &= \textstyle \sum_{i, i'} \lambda_i \lambda_{i'} w_{ij} w_{i'j} \inner{\vec{b}_i}{\vec{b}_{i'}}, \\
        \inner{\vec{a}_j}{\vec{b}_j}^2 
        &= \textstyle \sum_{i, i'} \lambda_i \lambda_{i'} w_{ij} w_{i'j}
        \inner{\vec{b}_i}{\vec{b}_j} \inner{\vec{b}_{i'}}{\vec{b}_j}.
    \end{align*}
    Hence
    \begin{align*}
        \norm{\vec{a}_j}^2 - \inner{\vec{a}_j}{\vec{b}_j}^2 
        &= \textstyle \sum_{i, i' \neq j} \lambda_i \lambda_{i'} w_{ij} w_{i'j} (\inner{\vec{b}_i}{\vec{b}_{i'}} -
    \inner{\vec{b}_i}{\vec{b}_j} \inner{\vec{b}_{i'}}{\vec{b}_j}) \\
        &\leq \textstyle 2(\epsilon')^2 \sum_{i, i' \neq j} \abs{\lambda_i} \abs{\lambda_{i'}}
    \end{align*}
    since $\mat{B}$ has normalized columns. On the other hand,
    \begin{align*}
        \norm{\vec{a}_j}^2
        &=
        \textstyle
        \lambda_j^2 w_{jj}^2 + 2 \sum_{i \neq j} \lambda_i \lambda_j w_{ij} w_{jj}
        \inner{\vec{b}_i}{\vec{b}_j} + \sum_{i, i' \neq j} \lambda_i \lambda_{i'} w_{ij}
        w_{i'j} \inner{\vec{b}_i}{\vec{b}_{i'}} \\
        &\geq
        \textstyle
        \lambda_j^2 (1-\epsilon)^2 - 4\epsilon' \abs{\lambda_j} \sum_{i \neq j} \abs{\lambda_i}
        - (\epsilon')^2 \sum_{i, i' \neq j} \abs{\lambda_i} \abs{\lambda_{i'}}.
    \end{align*}
    As a result,
    \begin{align*}
        \sin^2 \angle(\vec{a}_j, \vec{b}_j)
        &= \frac{\norm{\vec{a}_j}^2 - \inner{\vec{a}_j}{\vec{b}_j}^2}{\norm{\vec{a}_j}^2} \\
        &\leq \frac{2(\epsilon')^2 \cdot (n-1)^2\kappa^2}{(1-\epsilon)^2 - 4\epsilon' \cdot
        (n-1) \kappa - (\epsilon')^2 \cdot (n-1)^2 \kappa^2}.
        \qedhere
    \end{align*}
\end{proof}

\end{document}